\definecolor{ivory}{rgb}{1.0, 1.0, 0.95}
\definecolor{carnelian}{rgb}{0.7, 0.11, 0.11}
\definecolor{cinnamon}{rgb}{0.82, 0.41, 0.12}
\title{IW4}
\newtheorem{thm}{Theorem}[section]
\newtheorem{lem}[thm]{Lemma}
\newtheorem{prop}[thm]{Proposition}
\theoremstyle{definition}
\newtheorem{df}[thm]{Definition}
\newcommand{\out}{\textup{Out}(F_r)}
\newcommand{\outt}{\textup{Out}(F_3)}
\newcommand{\os}{\textup{CV}_r}
\newcommand{\ost}{\textup{CV}_3}
\newcommand{\teich}{Teichm\"{u}ller }
\newcommand{\RR}{\mathbb R}
\newcommand{\NN}{\mathbb N}
\newcommand{\ol}{\overline}
\newcommand{\mA}{\mathcal{A}}
\newcommand{\mV}{\mathcal{V}}
\newcommand{\mD}{\mathcal{D}}
\newcommand{\mM}{\mathcal{M}}
\newcommand{\G}{\Gamma}
\newcommand{\ZZ}{\mathbb{Z}}
\newcommand{\from}{\colon}
\newcommand{\vphi}{\varphi}
\newcommand{\cv}{\os}
\DeclareMathOperator{\LW}{LW}
\DeclareMathOperator{\SW}{SW}
\DeclareMathOperator{\IW}{IW}
\theoremstyle{definition}
\theoremstyle{plain}
\newtheorem{thmx}{Theorem}
\newtheoremstyle{TheoremNum}
{8.0pt plus 2.0pt minus 4.0pt}{8.0pt plus 2.0pt minus 4.0pt} 
{\itshape} 
{-0.15cm} 
{\bfseries} 
{.} 
{ }  
{\thmname{#1}\thmnote{ \bfseries #3}}
\theoremstyle{TheoremNum}
\newtheorem{duplicate}{}
\newcounter{commentcounter}
\tikzset{middlearrow/.style n args={4}{
		decoration={
			markings,
			mark=at position #1 with {\arrow{#2},\node[transform shape,#4] {#3};}},postaction={decorate}},
	middlearrow/.default={.5}{>}{}{below}	}
\begin{document}

	\title{Low complexity among\\ principal fully irreducible elements of $\mathrm{Out}(F_3)$}
	\author{Naomi Andrew}
	\author{Paige Hillen}
	\author{Robert Alonzo Lyman}
	\author{Catherine Eva Pfaff}
	
	\address{\tt Mathematical Institute, University of Oxford 
		\newline \indent  {\url{https://naomigandrew.wordpress.com/}}, } \email{\tt Naomi.Andrew@maths.ox.ac.uk}
	
	\address{\tt University of California - Santa Barbara Department of Mathematics
		\newline \indent  {\url{https://sites.google.com/view/paigehillen/home}}, } \email{\tt paigehillen@ucsb.edu}
	
	\address{\tt Department of Mathematics, Rutgers University - Newark
		\newline \indent  {\url{https://ryleealanza.org/}}, } \email{\tt robbie.lyman@rutgers.edu}
	
	\address{\tt Department of Mathematics \& Statistics, Queen's University
		\newline \indent  {\url{https://mast.queensu.ca/~cpfaff/}}, } \email{\tt c.pfaff@queensu.ca}

	\date{}
	\maketitle

\begin{abstract}
	We find the shortest realized stretch factor for a fully irreducible $\vphi\in\outt$ and show that it is realized by a ``principal'' fully irreducible element. We also show that it is the only principal fully irreducible outer automorphism in any rank produced by a single fold. 
\end{abstract}

\section{Introduction}

Let $F_r$ denote the free group of rank $r \geq 2$, and consider its outer automorphism group $\out$. Each element of $\out$ has an associated stretch factor (also called its growth rate or dilatation) \[ \sup_{w \in F_r} \limsup \sqrt[n]{\| \varphi^n(w) \|}.\] Here $\|w\|$ denotes the cyclically reduced word length with respect to some fixed free basis of $F_r$.
The stretch factor records how fast elements grow under iteration of $\varphi$, and is independent of the chosen basis and the representative of the outer class. 

For a fully irreducible element, where no power $\varphi^k$ preserves a proper free factor, Bestvina and Handel \cite{bh92} construct an ``irreducible train track representative'' for $\varphi$, a self homotopy equivalence of a graph with good behavior under iteration and which induces $\varphi$ on the fundamental group. They show that the stretch factor of $\varphi$ is realised as the Perron--Frobenius eigenvalue of the transition matrix, a non-negative integer matrix associated to the train track representative. 

Here we are concerned with two aspects of this theory. The first aspect is the minimal stretch factors attained by fully irreducible outer automorphisms. The set of possible values is studied, for example, in \cite{AKRstretch}, \cite{dkl15}, \cite{dkl17}, and \cite{thurstonstretch} (exposited upon in \cite{thurstonexplained}). 
The second aspect explores properties of train track representatives for certain fully irreducible outer automorphisms, and the dynamics of their action on the Culler--Vogtmann Outer space $\os$.

In both aspects, we investigate the outer automorphism 
\[
\psi=
\begin{cases}
	x \mapsto y \\
	y \mapsto z \\
	z \mapsto zx^{-1}. 
\end{cases}\]

\noindent  With a suitable marking, it is represented by the train track map:

\begin{center}
\includegraphics[width=2.25in]{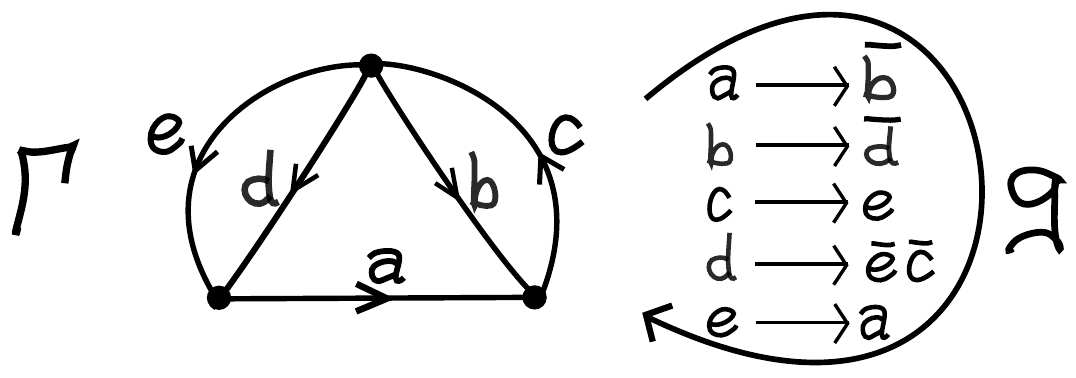}
\end{center}

Its stretch factor is the real root of $x^5-x-1$, approximately $1.167$. In fact, $\psi$ is a fully irreducible outer automorphism, and we show: 

\begin{duplicate}[Theorem~\ref{t:minimal}]
	The stretch factor of $\psi$ is minimal among fully irreducible elements in $\outt$. 
\end{duplicate}

The ingredients in the Theorem~\ref{t:minimal} proof are Lemma~\ref{lemma:Perron}, showing stretch factors of fully irreducible outer automorphisms are Perron numbers, and verification that $\frak{g}$ is an irreducible train track representative of a fully irreducible outer automorphism realising the minimum achievable Perron number. 

We use a result of Boyd listing the smallest Perron numbers for several fixed algebraic degrees \cite{boydsupplement}. Boyd lists the smallest (and in some degrees the second, third, or fourth smallest) Perron numbers for fixed algebraic degrees between 2 and 12. For ranks $r > 3$, this strategy of checking the smallest Perron numbers with algebraic degree in a fixed range will likely be less fruitful. The known examples of fully irreducible outer automorphisms with small stretch factors are quite a bit larger than the smallest Perron numbers listed in \cite{boydsupplement}.

Given a train track map, we can factor it as a series of \emph{folds} followed by a homeomorphism. It is notable that the train track map $\mathfrak{g}$ representing $\psi$ in Theorem~\ref{t:minimal} has a fold decomposition consisting of a single fold and a homeomorphism. As folds increase the number of edges in the image of a train track map, it seems reasonable to conjecture that outer automorphisms with minimal stretch factors have train track representatives with few folds in their fold decomposition. This idea is followed up and quantified in \cite{PaigeSymmetries}.

In the mapping class group setting, minimal stretch factors among pseudo-Anosov elements on a fixed surface of finite type have been widely studied. Until very recently, the minimum was only known for a few isolated examples of surfaces. Consider an orientable surface of finite type with genus $g$ and $p$ punctures. Previously the minimum stretch factor among all pseudo-Anosov mapping classes was precisely known only for small values of $(g, p)$, for example $(1,0)$ and $(1,1)$, $(2,0)$ \cite{ChoHam}, and $(0,p)$ for $p=3,4,5,6,7,8$ (\cite{SKL}, \cite{HamSong}, \cite{lt11braids}). Recent breakthrough work of \cite{TZ24} also computes the minimum stretch factor for $(0, p)$ for large $p$. Among orientable pseudo-Anosov mapping classes, the minimum is known for surfaces with $0$ punctures and genus $g=2,3,4,5,8$ (\cite{lt11}, \cite{h10}). For surfaces with $p > 0$ punctures, pseudo-Anosov mapping classes induce outer automorphisms of $F_r$ when $r = 2g+p-1$. However, there are many outer automorphisms which do not come from surface homeomorphisms, so minimal stretch factor results in the mapping class group setting do not directly carry over to $\out$. 

We also prove that the map $\psi$ is \emph{principal} in the sense of \cite{stablestrata}, where it is also proved that they possess a stability property mimicking that held by pseudo-Anosov mapping classes and used in \cite{randomout} and \cite{hittingmeasureout} to understand a typical outer automorphism and tree in $\partial\os$. Their action on Culler--Vogtmann Outer space generally more closely resembles a hyperbolic setting: While all fully irreducible elements act loxodromically, they only have an axis \emph{bundle} rather than a unique axis \cite{hm11}. But principal fully irreducible outer automorphisms do have a unique axis, reflecting the uniqueness of the Stallings fold decompositions of their irreducible train track representatives. This axis passes through the highest dimensional simplices of Outer space. 

The name ``principal'' is chosen to reflect similarities with the principal pseudo-Anosov mapping classes of a surface, as used for example by Masur in \cite{m82}. Every pseudo-Anosov mapping class has a representative leaving invariant a pair of transverse measured singular minimal foliations, one of which is expanded and the other of which is contracted by the homeomorphism. Pseudo-Anosov mapping classes act loxodromically on \teich space, and the pair of transverse foliations provides the endpoints of its axis in the Thurston boundary.
A pseudo-Anosov is \emph{principal} if these foliations have only 3-pronged singularities. Gadre and Maher \cite{gm16} proved principal pseudo-Anosov mapping classes random-walk generic. The ``singularity structure'' of a fully irreducible outer automorphism, namely its ideal Whitehead graph, is similarly controlled by its attracting endpoint, and in the case of a principal element it will be a union of the maximal number of triangles. As such, principal elements comprise the subset of those proved generic in \cite{randomout} that possess the stability property of \cite{stablestrata}.

As mentioned above, each train track map can be factored as a series of Stallings folds followed by a homeomorphism. Up to a reasonable equivalence relation, the number of folds in this decomposition is an invariant of a principal fully irreducible outer automorphism, and so we can try to minimise it. It turns out that the same outer automorphism achieves the minimum complexity --- over all ranks --- in this sense.

\begin{duplicate}[Theorem~\ref{t:unique}]
	Up to edge relabeling, the map $\mathfrak{g}$ is the only train track map representing a principal fully irreducible outer automorphism in any rank whose Stallings fold decomposition consists of only a single fold and then a graph-relabeling isomorphism.
\end{duplicate}

The tool we use for characterising rank-3 principal fully irreducible outer automorphisms is a directed graph $\widehat{\mA_3}$, we call the \emph{principal stratum automaton} (defined in \S \ref{ss:automaton}). This is a refinement of the lonely direction automaton of \cite{automaton}, which was used to determine the graphs carrying train track representatives of principal fully irreducible $\vphi\in\outt$. That theorem was proved by passing to a \emph{rotationless} power of $\varphi$, which is unhelpful for minimising other kinds of complexity such as the stretch factor or length of a Stallings fold decomposition. Our refinement allows us to work with the outer automorphism as it comes, allowing for the periodic behavior absent in the rotationless case.

\begin{duplicate}[Theorem~\ref{p:PrincipalGivesLoop}]
	Suppose $g$ is a train track representative of a principal fully irreducible $\vphi\in\outt$.
	Then the Stallings fold decomposition of $g$ is partial-fold conjugate to one determining a directed loop in $\widehat{\mA_3}$.
\end{duplicate}

We note that, subsequent to the posting of this paper, Theorem~\ref{p:PrincipalGivesLoop} was significantly generalized in \cite{pfaff2024out}.

We further note that, using \cite[Theorem 4.7]{loneaxes}, any principal fully irreducible $\vphi\in\out$ has a single unique invariant axis and the single unique invariant axis of any $\psi^{-1}\circ\vphi\circ\psi$ differs from that of $\vphi\in\out$ only by a change of marking. With a little work, this implies that the number of folds in a Stallings fold decompositions of the conjugacy class of a principal fully irreducible $\vphi$ is an invariant of the conjugacy class. This similarly holds for any ``lone axis'' fully irreducible outer automorphism. More generally, one would need to take the minimal number of folds in a fold decomposition.

In the process of proving Theorem~\ref{p:PrincipalGivesLoop}, we also prove in Proposition~\ref{lemma:map3} that any principal axis in $\ost$ passes through the $\outt$ orbit of simplices given by the graph shown above.

\subsection*{Structure of the paper} 
In \S\ref{ss:automaton} we introduce the principal stratum automaton, and prove that it captures the principal fully irreducible elements of $\outt$. \S\ref{s:ourphi} introduces the map $\frak{g}$ which is the subject of both theorems, and proves that it induces a principal fully irreducible outer automorphism. \S\ref{s:perron} contains the proof of Theorem~\ref{t:minimal} and \S\ref{s:single_fold} the proof of Theorem~\ref{t:unique}.

\subsection*{Acknowledgements}
We are grateful to the Women in Groups Geometry and Dynamics (WiGGD) program from which this paper arose, and to Anna Parlak for discussions on related questions. We are grateful to Lee Mosher for providing his talents as a rabbit-hole preventing sounding board, and to Ilya Kapovich and Caglar Uyanik for helping us track down references and their ongoing interest in our work. We would also like to thank the referee for a careful reading. This work has received funding from the European Research Council (ERC) under the European Union's Horizon 2020 research and innovation programme (Grant agreement No. 850930), an NSF postdoctoral fellowship, and an NSERC Discovery Grant. The 4th author is grateful to the Institute for Advanced Study, and Bob Moses for funding her membership there.



\section{Background}{\label{s:background}}

Assume throughout this section that $\G$ is a finite oriented graph where each vertex has valence at least $3$ and $F_r$ is a free group of rank $r\geq 3$.


\subsection{Edge Maps on Graphs}{\label{s:graphmaps}}

Suppose $\G$ has positively oriented edges $\{e_1, e_2 \dots, e_n\}$ and vertices $\{v_1, v_2, \dots, v_m\}$. We use the notation $E\G:=\{e_1, e_2 \dots, e_n\}$, and $V\G :=\{v_1, v_2, \dots, v_m\}$, and $E^{\pm}\G = \{e_1, \overline{e_1}, \dots, e_n, \overline{e_n}\}$, with an overline indicating a reversal of orientation. Given $v\in V\G$, a \textit{direction at $v$} will mean an element of $E^{\pm}\G$ with initial vertex $v$. We let $\mD\G$ denote the set of directions at vertices in $\G$. A \textit{turn at $v$} will mean an unordered pair $\{d_1, d_2\}$ of directions at $v$. The turn is \emph{degenerate} if $d_1=d_2$.

\vspace*{-1.5mm}

An \textit{edge path} (or \textit{path}) $\rho$ in $\G$ is a finite sequence $(a_1, a_2, \dots, a_{\ell})\in (E^{\pm}\G)^{\ell}$ such that there exists a sequence $(v_1, v_2, \dots, v_{\ell-1})\in (V\G)^{\ell-1}$ satisfying that the turn $\{\overline{a_{j}}, a_{j+1}\}$ 	
\parpic[r]{\includegraphics[width=1.2in]{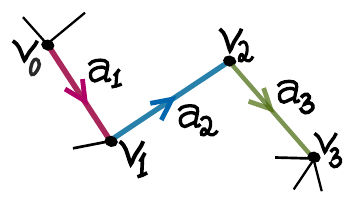}} 
\noindent is a turn at $v_j$ for each $j\in\{1,2,\dots,\ell-1\}$. For such a path $(a_1, a_2, \dots, a_m)$ we write $\gamma = a_1a_2\dots a_m$ and say $\gamma$ \textit{contains} the oriented edges $a_1, a_2, \dots, a_m$ and \textit{takes} the turns $\{\overline{a_1}, a_2\}$, $\{\overline{a_2}, a_3\}$, $\dots$, $\{\overline{a_{n-1}}, a_n\}$. We call $\gamma$ \textit{tight} if it takes no degenerate turns, which we colloquially describe as there being no ``backtracking.''

An \textit{edge (or graph) map} $g:\G\to\G'$ will mean
\begin{itemize}
\item a map $\mV:V\G\to V\G'$, where we write $g(v)$ for $\mV(v)$, together with
\item for each $e\in E^{\pm}\G$, an assignment of a path $g(e)$ in $\G'$ such that
\begin{enumerate}
\item if the initial vertex of $e$ is $v$, then the initial vertex of $g(e)$ is $g(v)$, and
\item if $g(e)$ is the edge path $g(e) = a_1a_2\dots a_m$, then $g(\overline{e})$ is the edge path concatenation $g(\overline{e}) = \overline{a_m}\dots\overline{a_2}~\overline{a_1}$.
\end{enumerate}
\end{itemize}

Viewing $\Gamma$ and $\Gamma'$ as topological spaces, $g$ is a continuous map sending vertices to vertices. We say a turn $\{d_1,d_2\}$ is $g$-taken if it appears in the image $g(e)$ of some edge of $\Gamma$, and call $g$ \emph{tight} if the image of each edge is a tight path. In particular, no degenerate turns are $g$-taken.

If $\gamma = a_1a_2\dots a_n$ is a path in $\G$ for some $a_1, a_2, \dots, a_n \in E^{\pm}\G$, then $g(\gamma)$ will mean the concatenation of edge paths $g(\gamma) = g(a_1)g(a_2)\dots g(a_n)$. Note that $g(\gamma)$ is tight if and only if $\gamma$ is tight and $g$ is locally injective on $\gamma$.

To $g$ we associate a \emph{direction map} $Dg:\mD\G\to\mD\G'$ such that if $g(e) = a_1 a_2\dots a_m$, for some $m\geq 1$ and $a_1,a_2,\dots,a_m \in E^{\pm}\G$, then $Dg(e) = a_1$.  Now suppose that $g\from \G \to \G$ is a self-map.
We call a direction $e$ \emph{periodic} if $Dg^k(e) = e$ for some $k>0$, and \emph{fixed} if $k=1$. The turn $\{d_1,d_2\}$ is called an \emph{illegal turn} for $g$ if $\{Dg^k(d_1),Dg^k(d_2)\}$ is degenerate for some $k$. Defining an equivalence relation on $\mD\G$ by $d_1\sim d_2$ when $\{d_1,d_2\}$ is an illegal turn, the equivalence classes are called \emph{gates}. Note that each gate at each periodic vertex contains a unique periodic direction.

Viewing $g$ as a continuous map of graphs, $g$ \emph{represents} $\vphi$ when $\pi_1(\G)$ has been identified with $F_r$ (that is $\G$ is \emph{marked}) and $\vphi$ is the induced map of fundamental groups. When a marking is not explicitly given, we mean ``there exists a marking such that.''


\subsection{Train track maps and fully irreducible outer automorphisms}{\label{s:fi}}
	Suppose $g:\G\to\G$ is an edge map. We call $g$ a \textit{train track (tt) map} if $g^k$ is tight for each $k\in\ZZ_{>0}$. We call the train track map $g$ \textit{expanding} if for each edge $e \in E\G$ we have $|g^n(e)|\to\infty$ as $n\to\infty$, where for a path $\gamma$ we use $|\gamma|$ to denote the number of edges $\gamma$ traverses (with multiplicity). Note that, apart from our not requiring a ``marking,'' these definitions coincide with those in  \cite{bh92} when $g$ is in fact a homotopy equivalence of graphs (viewed topologically). 

The \textit{transition matrix} $M(g)$ of a tt map $g\from\Gamma \to \Gamma$ is the square $|E\G| \times |E\G|$ matrix $[a_{ij}]$ such that $a_{ij}$, for each $i$ and $j$, is the number of times $g(e_i)$ contains either $e_j$ or $\overline{e_j}$. Note that each transition matrix is a nonnegative integer matrix.

A nonnegative integral matrix $A=[a_{ij}]$ is \textit{irreducible} if for each $(i,j)$, there is a $k\in\ZZ_{>0}$ so that the ${ij}^{th}$ entry of $A^k$ is positive, and so in particular is at least $1$.
If $A^k$ is strictly positive for some $k\in\ZZ_{>0}$ then $A$ is \textit{primitive}. Furthermore, $A$ is \textit{Perron--Frobenius (PF)} if there exists an $N$ such that, for each $k \geq N$, we have that $A^k$ is strictly positive. 

For nonnegative integral matrices, being primitive is equivalent to being irreducible plus aperiodic. 
If $M=M(g)$ is primitive, then by Perron--Frobenius theory, $M$ has a unique eigenvalue $\lambda(g)$ of maximal modulus. $\lambda(g)\in\RR_{>1}$ and is the Perron--Frobenius (PF) eigenvalue of $M$. It is called the \textit{stretch factor} (or \textit{dilatation}) of $g$. 

A tt map is \textit{irreducible} if its transition matrix is irreducible. Not every element of $\out$ is represented by a tt map, and even fewer by irreducible tt maps. An outer automorphism $\vphi\in\out$ is \emph{fully irreducible} if no positive power preserves the conjugacy class of a proper free factor of $F_r$. Bestvina and Handel \cite{bh92} proved that each fully irreducible outer automorphism admits expanding irreducible tt representatives and that the stretch factor of these representatives is an invariant of the outer automorphism.

By \cite[Corollary 4.43]{fh11}, for each $r \geq 2$, there exists an $R(r) \in \NN$ such that for each expanding irreducible tt representative $g$ of a fully irreducible $\vphi \in \out$, among other properties, each periodic direction is fixed by $g^{R(r)}$. This power $R$ is called the \emph{rotationless} power.


\subsection{Whitehead graphs \& lamination train track (ltt) structures}{\label{ss:WGs}}
Local Whitehead graphs, stable Whitehead graphs, and ideal Whitehead graphs were introduced in \cite{hm11}.
We give definitions here only in the circumstance of no periodic Nielsen paths (PNPs), as this will always be the case for us. (PNPs only impact the ideal Whitehead graph definition; since we do not explicitly use them, we refer the reader to \cite{bh92,bfh00} for definitions.)

Let $g:\G \to \G$ be a tt map. The \textit{local Whitehead graph} $\LW(g; v)$ at a $v \in V\G$ has a vertex for each direction at $v$ and an edge connecting the vertices corresponding to a pair of directions $\{d_1,d_2\}$ at $v$ precisely when the turn $\{d_1,d_2\}$ is $g^k$-taken for some $k\in\ZZ_{>0}$. Given a fixed vertex $v$, the \emph{stable Whitehead graph} $\SW(g;v)$ is the restriction of $\LW(g;v)$ to the periodic direction vertices and the edges connecting them. In terms of gates, $\SW(g;v)$ has a vertex for each gate at $v$.
	
In the absence of PNPs, if $g$ represents a fully irreducible outer automorphism $\vphi$, then the \textit{ideal Whitehead graph} $\IW(\vphi)$ for $\vphi$ is defined as
\[\IW(\vphi)=\bigsqcup_{v\in V\Gamma} \SW(g;v),\] 
but with components containing only 2 vertices removed.

The ideal Whitehead graph is an invariant of the conjugacy class of the outer automorphism represented by $g$ and $\IW(\vphi^k)=\IW(\vphi)$ for each $k\in\ZZ_{>0}$ \cite{hm11, Thesis}.

The \textit{lamination train track (ltt) structure} $G(g)$ is obtained from its \emph{underlying graph} $\G$ by replacing each vertex $v \in V\G$ with $\LW(g; v)$: replace $v$ with a vertex for each directed edge at $v$ labeled with that direction. Then identify each of these new vertices with the corresponding vertex of $\LW(g; v)$. Vertices and edges of $\SW(g;v)$ are colored purple and the remaining vertices and (open) edges are colored red. Alternatively, one could start with $\bigsqcup_{v\in V\G} \LW(g; v)$, color $\LW(g; v)$ as just described, and then include a directed edge $[e,\ol{e}]$ for each directed edge $e\in E\G$. To simplify figures, if the local Whitehead graph at a vertex of $\G$ is complete we do not always draw it.

\begin{figure}[H]
\includegraphics[width=5.35in]{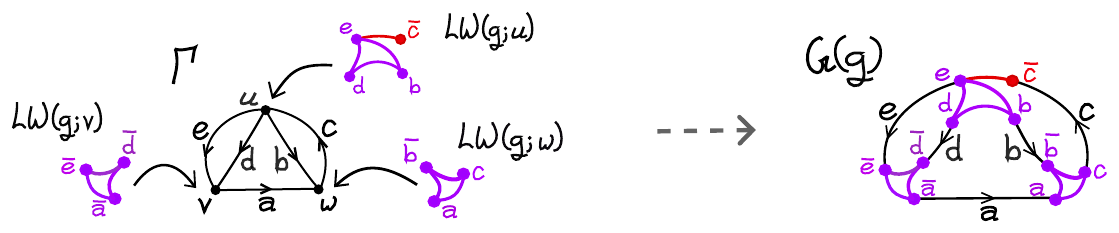}
\vspace*{4mm}
 \caption{The right-hand image is the ltt structure $G(\mathfrak{g})$ for the map $\mathfrak{g}$ of (\ref{g}) with the taken turns as in (\ref{turns}). The 3 local Whitehead graphs are colored, with the stable Whitehead graphs colored purple (missing only $\bar{c}$). The ideal Whitehead graph is the union of the purple graphs, i.e. of the stable Whitehead graphs.} \label{f:wg}
\end{figure}


\vspace*{-9mm}

\subsection{Full irreducibility criterion}{\label{ss:fic}}

We use the following criterion for proving that a tt map represents a fully irreducible outer automorphism.

\begin{prop}[{Full Irreducibility Criterion, \cite[Proposition 4.1]{IWGII}}]\label{prop:FIC}
	Suppose that $g\colon \G \to \G$ is a PNP-free, irreducible train track representative of $\vphi \in \out$ such that $M(g)$ is Perron--Frobenius and all the local Whitehead graphs are connected. Then $\vphi$ is fully irreducible.
\end{prop}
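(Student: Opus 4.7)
The plan is to argue the contrapositive: if $\vphi$ is not fully irreducible, then one of the four hypotheses must fail. Suppose some power $\vphi^k$ preserves the conjugacy class $[F']$ of a proper free factor $F' < F_r$. The first step is to reduce to $k=1$ by replacing $g$ with $g^k$. All hypotheses are inherited: $g^k$ remains a tight train track map by definition; $M(g^k) = M(g)^k$ is still Perron--Frobenius since the set of primitive matrices is closed under taking powers; $g^k$ has no PNPs, because any PNP for $g^k$ would yield one for $g$; and each local Whitehead graph $\LW(g^k; v)$ contains $\LW(g; v)$ as a subgraph (any $g^j$-taken turn at $v$ is also $(g^k)^m$-taken for suitable $m$, using that expanding irreducible train track maps admit preimages of arbitrary edges), so connectedness is preserved. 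Hence we may assume $\vphi$ itself preserves $[F']$.

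The second step is to realize $F'$ geometrically as a $g$-invariant proper subgraph $H \subsetneq \G$ (possibly after subdivision) with $\pi_1(H) \simeq F'$. This is the technical heart of the argument, and it is where the PNP-freeness is essential. The Bestvina--Handel framework tells us that a $\vphi$-invariant proper free factor conjugacy class either arises from a $g$-invariant proper subgraph or is detected by a periodic Nielsen path obstruction; excluding the latter by hypothesis delivers the desired $H$.

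The final step is to derive a contradiction from the existence of $H$. The set $E H \subsetneq E\G$ is a nonempty, proper, $g$-invariant subset of edges, producing a proper block decomposition of $M(g)$; this contradicts irreducibility of $M(g)$ outright, and in particular contradicts the Perron--Frobenius hypothesis. Moreover, at any vertex $v$ of $\G$ lying in both $H$ and its complement, a turn between a direction into $H$ and a direction into $\G \setminus H$ can never be $g$-taken, since $g$ preserves $H$; thus $\LW(g; v)$ splits into at least two components, contradicting the connectedness hypothesis. Either conclusion completes the proof.

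The main obstacle is the middle step: translating the algebraic statement ``$\vphi$ preserves $[F']$'' into the topological statement ``$H$ is $g$-invariant with $\pi_1(H) \simeq F'$.'' This uses a standard but nontrivial piece of the Bestvina--Handel relative train track machinery, with PNP-freeness playing the role of closing the gap that otherwise separates invariant free factors from invariant subgraphs. Once this translation is in hand, the remaining three hypotheses (irreducibility, primitivity of $M(g)$, and connectedness of local Whitehead graphs) each independently rule out the existence of such an $H$, giving the desired contradiction.
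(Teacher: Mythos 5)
The paper does not prove Proposition~\ref{prop:FIC}; it simply quotes it from \cite{IWGII}, so there is no ``paper's own proof'' to compare against. Evaluating your argument on its merits, the first and third steps are fine (your justification that $\LW(g^k;v)=\LW(g;v)$ is a bit muddled, but primitivity of $M(g)$ does give it: any $g^j$-taken turn reappears in $g^{km}(e')$ for any $e'$ once $km-j$ exceeds the primitivity exponent). The problem is your second step, which you correctly identify as ``the technical heart'' but then assert rather than prove, and the assertion is false.

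It is not true that a $\vphi$-invariant proper free factor conjugacy class, even in the absence of PNPs, is realized (after subdivision) by a $g$-invariant proper subgraph of the \emph{given} train track graph $\G$. The Bestvina--Handel machinery produces a relative train track representative with a filtration by invariant subgraphs, but on a \emph{new} graph obtained by their algorithm, not on the graph carrying an arbitrary prescribed train track map. The sharpest counterexample to your dichotomy is a train track map on a rose: a rose has no proper noncontractible subgraphs whatsoever, yet an irreducible train track map on a rose can perfectly well represent a reducible outer automorphism. PNP-freeness does not close this gap --- the ``invisible'' invariant free factor manifests not as a subgraph but as a disconnection in the local Whitehead graph at the single vertex, and that is precisely why connectedness of the Whitehead graphs is a \emph{hypothesis} of the criterion rather than a consequence of irreducibility of $M(g)$. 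In your framing the Whitehead-graph hypothesis is essentially redundant (it is derived from the hypothetical subgraph $H$), which is a sign the reduction went wrong. The actual proof in \cite{IWGII} and in Kapovich's related work proceeds via the attracting lamination of Bestvina--Feighn--Handel: if $\vphi$ is not fully irreducible then the unique attracting lamination (unique because $M(g)$ is Perron--Frobenius) is carried by a proper free factor, and the combinatorial content of ``carried by a proper free factor'' is exactly that it forces either a periodic Nielsen path or a disconnected local Whitehead graph --- with no invariant subgraph of $\G$ ever appearing.
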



\subsection{Principal \& ageometric fully irreducible outer automorphisms}{\label{ss:principal}}

Principal fully irreducible outer automorphisms (sometimes just called principal outer automorphisms) were introduced in \cite{stablestrata} as analogues of pseudo-Anosov homeomorphisms with only $3$-pronged singularities: As in \cite{stablestrata}, we call a fully irreducible $\vphi \in \out$ \textit{principal} if $\IW(\vphi)$ is the disjoint union of $2r-3$ triangles.

We include now a short description of properties of principal fully irreducible elements omitting definitions not used elsewhere in this paper. The interested reader can find the definitions in \cite{loneaxes}, which also includes a nice explanation of the history at the start of \S 2.9. For a principal $\vphi \in \out$, the rotationless index is $i(\vphi)=\frac{3}{2}-r$. This implies $\vphi$ is ageometric and, by \cite[Theorem 4.5]{loneaxes}, all its tt representatives are stable. In particular, none of its tt representatives has a PNP (\cite[Theorem 3.2]{bf94}). By \cite[Theorem 4.7]{loneaxes} one also has that $\vphi$ has only a single axis (in the sense of \S \ref{ss:os}).


\subsection{Folds \& Stallings fold decompositions}{\label{ss:StallingsFoldDecompositions}}

~\vskip1pt

\parpic[r]{\includegraphics[width=1.7in]{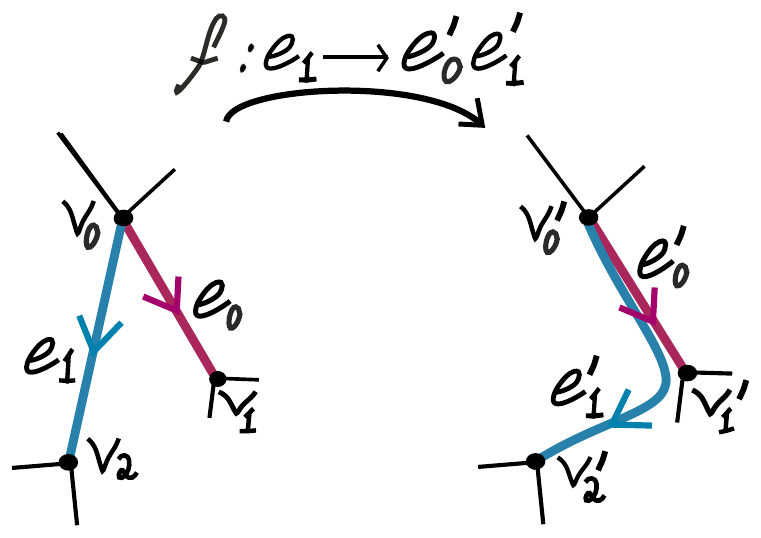}} 
Suppose $\G$ and $\G'$ are graphs viewed topologically and $e_0,e_1\in E^{\pm}\G$ are distinct edges emanating from a common vertex. We say $\G'$ is obtained from $\G$ by a \emph{proper full fold of $e_1$ over $e_0$} if there exist orientation-preserving homeomorphisms $\sigma_0\from [0,1]\to e_0$ and $\sigma_1\from [0,2]\to e_1$ so that 
$\G'=\G\backslash\sim$ is the topological quotient of $\G$ with respect to the equivalence relation $\sim$ defined by $\sigma_0(t)=\sigma_1(t)$ for each $t\in [0,1]$. We say that $\G'$ is obtained from $\G$ by a \emph{complete fold of $e_0$ and $e_1$} if instead $\sigma_1\from [0,1]\to e_1$ (identifying all of $e_0$ and $e_1$) and a \emph{partial fold of $e_0$ and $e_1$} if instead $\sigma_0\from [0,2]\to e_0$ (identifying initial segments of both). By a \emph{fold} (or sometimes \emph{Stallings fold}), we will mean either a proper full fold, complete fold, or partial fold. We call $\{e_0,e_1\}$ the \emph{folded turn} (or \emph{turn folded}). Observe that in a proper full fold the number of edges is preserved, in a complete fold it is reduced, and in a partial fold it is increased.


Since they appear often, we describe here the notational conventions we use for proper full folds. 
Suppose that $f\from\G\to\G'$ is a single proper full fold of an edge $e_1$ over an edge $e_0$, as depicted above. Apart from $e_1$, each edge $e_k\in E\G$ is mapped to a single edge of $\G'$, which we call $e_k'$. The image of $e_1$ is an edge-path in $\G'$ consisting of 2 edges, the latter of which we call $e_1'$. The map $f$ is then defined by 


$$
f =
\begin{cases}
e_1\mapsto e_0'e_1' \\
e_k \mapsto e_k' \text{ for } k\neq 1\\
\end{cases}
$$

\noindent and we just write $f\from e_1\mapsto e_0e_1$. We call the edge-labeling of $\G'$ just described the \emph{induced edge-labeling}. Note that it can be more convenient, especially where an orientation was chosen in advance, to write $f \from e_1 \mapsto e_1e_0$ for the map induced by folding $\bar{e}_1$ over $\bar{e}_0$. We may drop primes in complicated diagrams.

Stallings \cite{s83} showed that a surjective homotopy equivalence graph map $g \colon \G \to \G'$ factors as a composition of folds and a final homeomorphism, giving a \emph{Stallings fold decomposition}. 

\[
\xymatrix{\G=\Gamma_0 \ar[r]_{g_1} \ar@/^4pc/[rrrr]_{g=\mathfrak{g}_0} & \Gamma_1 \ar[r]_{g_2} \ar@/^3pc/[rrr]_{\mathfrak{g}_1} & \Gamma_2 \ar[r]_{g_3} \ar@/^2pc/[rr]_{\mathfrak{g}_2} & \dots \ar[r]_{g_n}  & \Gamma_n=\Gamma' \\}
\]

In a Stallings fold decomposition the folds $g_{k+1}$ (sometimes called \textit{Stallings folds}) of $e_0$ and $e_1$ in $\G_k$ additionally satisfy $\mathfrak{g}_k(\sigma_0(t))=\mathfrak{g}_k(\sigma_1(t))$ for each $t\in [0,1]$ and that the terminal vertices of $e_0$ and $e_1$ are distinct points in $\mathfrak{g}_k^{-1}(V\G')$. The latter property ensures $g$ is a homotopy equivalence.

In general, each fully irreducible $\vphi\in\out$ has many tt  representatives, each of which can have several distinct Stallings fold decompositions. 
By \cite[Theorem 4.7]{loneaxes}, principal outer automorphisms  (and their powers)  
each have a unique Stallings fold decomposition.


\subsection{The Outer space \texorpdfstring{$\os$}{CVn} \& its (principal) geodesics}{\label{ss:os}}

Culler--Vogtmann Outer space was first defined in \cite{cv86}. We refer the reader to \cite{FrancavigliaMartino,b15,v15} for background on Outer space, giving only an abbreviated discussion here. For $r\ge 2$ denote the (volume-1 normalized) Outer space for $F_r$ by $\os$.  Points of $\os$ are equivalence classes of volume-1 marked metric graphs $h:R_r\to\G$ where $R_r$ is the $r$-rose, and $\G$ is a finite volume-1 metric graph with betti number $b_1(\Gamma)=r$ and with all vertices of degree at least 3, and $h$ is a homotopy equivalence called a \emph{marking}. Outer space $\os$ has a simplicial complex structure with some faces missing: there is an open simplex for each marked graph (obtained by varying the lengths on the edges of that graph). The faces of a simplex are obtained by collapsing the edges of a forest (so that their lengths become zero). There is an asymmetric metric $d_{\cv}$ on $\os$.

There is an action (by isometries) of $\out$ on $\os$, given by changing the marking. We denote the quotient by $\mM_r$.

Given a Stallings fold decomposition of a tt map $g$, one can define a ``periodic fold line'' in $\os$ using Skora's \cite{s89} interpretation of the decomposition as a sequence of folds performed continuously.
In \cite[Lemma 2.27]{stablestrata} it is proved that periodic fold lines associated to tt maps are geodesics in the sense that,
given 3 points $\gamma(t_1),\gamma(t_2),\gamma(t_3)$ on the geodesic $\gamma$ with $t_1<t_2<t_3$,
we have $d_{\cv}(\gamma(t_1),\gamma(t_2))+d_{\cv}(\gamma(t_2),\gamma(t_3))= d_{\cv}(\gamma(t_1),\gamma(t_3))$. 

Since a general fully irreducible $\vphi \in \out$ can have several distinct Stallings fold decompositions, it can have several distinct periodic fold lines. However, the principal outer automorphisms that we are interested in will have just a single periodic fold line, its ``lone axis.''


\subsection{Fold-conjugate decompositions}{\label{ss:FoldConjugate}}


Since an axis for a fully irreducible $\vphi\in\out$ has a periodic structure, it becomes useful to view its Stallings fold decompositions cyclically. With some work one can see that starting at a different fold in a decomposition now yields a tt map representing an element of $\out$ that is $\out$-conjugate to $\vphi$ and with the same axis. It is also possible to start the tt map ``in the middle of a fold.'' We formalize here these different notions of cyclically permuting a Stallings fold decomposition or, equivalently, shifting along an axis.

\parpic[r]{\includegraphics[width=.7in]{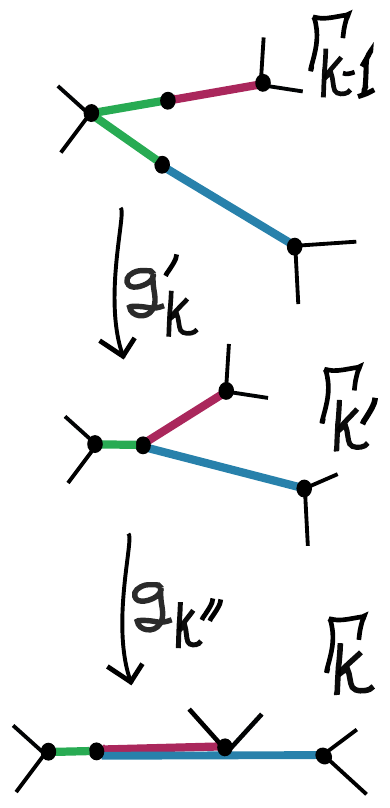}} 
A \emph{subdivided fold} will mean a fold written as a composition of a partial fold and then a completion of that fold, as depicted to the right. Suppose $\G_{0} \xrightarrow{g_1} \G_{1} \xrightarrow{g_2} \cdots \xrightarrow{g_{m-1}} \G_{m-1} \xrightarrow{g_m} \G_m$ and $\G'_{0} \xrightarrow{h_1} \G'_{1} \xrightarrow{h_2} \cdots \xrightarrow{h_{n-1}} \G'_{n-1} \xrightarrow{h_n} \G'_n$ are Stallings fold decompositions of homotopy equivalence tt maps $g \from \Gamma \to \Gamma$ and $h \from \Gamma' \to \Gamma'$, respectively. We say the decompositions are \emph{fold-conjugate} if one of the following three possibilities occurs. In all cases the the vertical arrows are label-preserving graph isomorphisms.

Either the folds of the 2nd sequence are a cyclic shift of those of the 1st: there is some $j$ so that diagram (\ref{a}) commutes.

\begin{equation}\label{a}
  \begin{tikzcd}
		\Gamma_j \ar{r}{g_{j+1}} \ar{d} & \Gamma_{j+1} \ar{r}{g_{j+2}} \ar{d} & \dots \ar{r}{g_m} & \Gamma_m = \Gamma_0 \ar{r}{g_1} \ar{d} &  \dots \ar{r}{g_{j-1}} & \Gamma_{j-1} \ar{r}{g_{j}} \ar{d} & \Gamma_j \ar{d}\\
		\Gamma'_0 \ar{r}{h_1} & \Gamma'_{1} \ar{r}{h_2} & \dots \ar{r}{h_{n-j}} & \Gamma'_{n-j} \ar{r}{h_{n-j+1}} &  \dots \ar{r}{h_{n-1}} & \Gamma'_{n-1} \ar{r}{h_{n}}  & \Gamma'_n 
\end{tikzcd}
\end{equation}

\noindent Or, the folds of the 1st are a cyclic shift of those of the 2nd, but starting midfold: there is some $\ell$ so that after subdividing $h_{\ell}$ into $\G'_{\ell-1} \xrightarrow{h_{\ell}'} \G'_{{\ell}'} \xrightarrow{h''_{\ell}} \G'_{\ell}$ diagram (\ref{b}) commutes.
\begin{equation}\label{b}
 \begin{tikzcd}
		\Gamma_0 \ar{r}{g_{1}} \ar{d} & \Gamma_{1} \ar{r}{g_{2}} \ar{d} & \dots \ar{r}{g_{m-{\ell}}} & \Gamma_{m-{\ell}} \ar{r}{g_{m-{\ell}+1}} \ar{d} &  \dots \ar{r}{g_{m-1}}  & \Gamma_{m-1} \ar{r}{g_{m}} \ar{d} & \Gamma_m \ar{d}\\
		\Gamma'_{\ell'} \ar{r}{h''_{\ell}} & \Gamma'_{\ell} \ar{r}{h_{\ell+1}} & \dots \ar{r}{h_{n}} & \Gamma'_{n} = \Gamma'_0 \ar{r}{h_{1}}  & \dots \ar{r}{h_{\ell-1}} & \Gamma'_{\ell-1} \ar{r}{h'_{\ell}}  & \Gamma'_{\ell'} 
\end{tikzcd}
\end{equation}

\noindent Or, the folds of the 2nd are a cyclic shift of those of the 1st, but starting midfold: there is some $k$ so that after a fold subdivision of some $g_k$ into $\G_{k-1} \xrightarrow{g_k'} \G_{k'} \xrightarrow{g''_{k}} \G_k$ diagram (\ref{c}) commutes.
\begin{equation}\label{c}
\begin{tikzcd}
		\Gamma_{k'} \ar{r}{g''_{k}} \ar{d} & \Gamma_{k} \ar{r}{g_{k+1}} \ar{d} & \dots \ar{r}{g_m} & \Gamma_m = \Gamma_0 \ar{r}{g_1} \ar{d}  & \dots \ar{r}{g_{k-1}} & \Gamma_{k-1} \ar{r}{g'_k} \ar{d} & \Gamma_{k'} \ar{d}\\
		\Gamma'_0 \ar{r}{h_1} & \Gamma'_{1} \ar{r}{h_2} & \dots \ar{r}{h_{n-k}} & \Gamma'_{n-k} \ar{r}{h_{n-k+1}} & \dots \ar{r}{h_{n-1}} & \Gamma'_{n-1} \ar{r}{h_{n}}  & \Gamma'_{n} 
\end{tikzcd}
\end{equation}

Fold-conjugate decompositions are \emph{partial-fold conjugate} if they fall in the case of (\ref{b}) or (\ref{c}), that is if one of the fold sequences is a cyclic shift of the other but for starting midfold. They are called partial-fold conjugate because the conjugating sequence contains a partial fold. In this case, $\mid n-m \mid =1$.

Fold-conjugate tt maps represent $\out$-conjugate outer automorphisms, hence share all conjugacy class invariants, such as ideal Whitehead graphs, and whether or not a map is fully irreducible, and then also principal.

We can also consider an equivalence relation generated by (\ref{a})-(\ref{c}) on a more general class of fold sequences. This class of fold sequences would include pathological examples with many ``unnecessary'' subdivisions of one or more folds, as well as allowing us to start in the middle of any fold in the sequence. The benefit of such a relation is that it would allow one to define and understand an $\out$-conjugacy class invariant that is the minimal number of folds in an equivalence class with respect to the partial-fold conjugacy relation.

\vskip10pt


\section{The Rank-3 Principal Stratum Automaton \texorpdfstring{$\widehat{\mA_3}$}{\^A3}}
	\label{ss:automaton}

The ``Lonely Direction Automaton'' of \cite{automaton} includes as directed loops only the Stallings fold decompositions of rotationless principal fully irreducible $\vphi\in\outt$. To include directed loops for all principal fully irreducible outer automorphisms, instead of just their rotationless powers, we construct a new directed graph we call the ``Rank-3 Principal Stratum Automaton'' $\widehat{\mA_3}$. Before introducing $\widehat{\mA_3}$, we review $\mA_3$ as defined and constructed in \cite{automaton}.

\subsection{The Lonely Direction Automaton $\mA_3$}
	\label{ss:A3}

In \cite{automaton}, $\mA_3$ is defined in a constructive manner using ``comprehensive loops'' and ``permissible folds.'' We give a simplified description of the construction now. $\mA_3$ is the disjoint union of the strongly connected components of the following directed graph. 

{\bf{Nodes:}} There is a node for each order pair $(\Gamma, \ell)$ such that \\
\indent $\bullet$ $\G$ is a rank-3 graph (in the sense of \S \ref{s:background}) satisfying that all vertices are of valence-3, except a single vertex $v$ of valence-4 and\\
\indent $\bullet$ $\ell$ is a loop in $\G$ traversing each edge of $\G$ in at least one of the two directions and\\
\indent $\bullet$ there is a direction $d$ at $v$ so that $\ell$ takes each turn of $\G$ except precisely two of the turns at $v$ containing $d$.

The node is the ltt structure $ltt(\Gamma, \ell)$ defined in the same manner as in \S \ref{ss:WGs} except that the colored edges now come from the turns taken by $\ell$ and the single red vertex is that labeled by $d$. The nodes for $ltt(\Gamma, \ell)$ and $ltt(\Gamma', \ell')$ are equivalent (the same node), if there exists a decoration-preserving (meaning preserving both that color and labeling) graph isomorphism from $ltt(\Gamma, \ell)$ to $ltt(\Gamma', \ell')$. That is, the only recorded information from $\ell$ is the turns it takes.

{\bf{Directed edges:}} There is a directed edge $E(f)$ from $ltt(\Gamma, \ell)$ to $ltt(\Gamma', \ell')$ precisely when there is a proper full fold $f\from\G\to\G'$ that does not fold any turn taken by $\ell$. That is, the turn folded by $f$ is not represented by a colored edge in $ltt(\Gamma, \ell)$.


\subsection{The Rank-3 Principal Stratum Automaton $\widehat{\mA_3}$}
	\label{ss:A3hat}  

We build on $\mA_3$ to construct $\widehat{\mA_3}$ by adding in ``permutation arrows'' where ltt structure permutation relabelings exist in the following sense:

\begin{df}[Graph/permutation relabeling]
	\label{d:relabeling}
Suppose that $\G$ is a directed labeled graph and $\sigma$ is a permutation of $E^{\pm}\G$ so that $\sigma(\bar{e})=\overline{\sigma({e})}$ for each $e\in E^{\pm}\G$. By $\sigma\cdot\G$ we mean the directed labeled graph obtained from $\G$ by replacing each edge-label $e$ with $\sigma(e)$. By a \textit{graph relabeling (by $\sigma$)} we mean the graph isomorphism $g_{\sigma}\from\G\to\sigma\cdot\G$ defined by that for each $e\in E\G$ the image of $e$ is the directed edge $\sigma(e)\in E(\sigma\cdot\G)$. Suppose that $G$ is an ltt structure with underlying graph $\G$, we let $\sigma\cdot G$ denote the ltt structure on $\sigma\cdot\G$ obtained from $G$ by relabeling the black edges and vertices via $\sigma$. We then consider $\sigma\cdot G$ to be a \textit{permutation relabeling} of $G$ by $\sigma$. 

Recall from \S \ref{ss:StallingsFoldDecompositions} that for a proper full fold $f\from \G\to\G'$ we know that $| E\G | = | E\G' |$ and $\G'$ comes with an induced edge-labeling. 
The induced edge-labeling then gives an identification of $E\G$ with $E\G'$, and with this induced edge-labeling we can ``push forward'' the graph relabeling to $g_\sigma': \Gamma' \to \sigma \cdot \Gamma'$. Given a fold $f$ of $\Gamma$ and a graph relabeling $g_\sigma$, we obtain a fold of $\sigma \cdot \Gamma$ by ``conjugating'': $f' := g_\sigma' \circ f \circ g_\sigma^{-1}$. In what follows we will drop the prime from the notation $g_\sigma'$, letting $g_\sigma$ refer to the permutation relabeling after pushing forward as many times as necessary in each context.
\end{df}

\bigskip

The following lemma provides a concrete description of the $\mA_r$, and $\widehat{\mA_r}$, nodes.

\smallskip

\begin{lem}[Lonely Direction Lemma]\label{lemma:LonelyDirection} Suppose $\vphi\in\out$ is a principal fully irreducible outer automorphism, then each tt representative of $\vphi$ is partial-fold conjugate to a tt map $g\from \G\to\G$ satisfying:
\begin{enumerate}[(a)]
\item each $SW(v,g)$ is a triangle and there are $2r-3$ such triangles, and
\item all vertices of $\G$ have valence 3 except a single vertex which has valence 4, and
\item all but one direction in $\G$ is $g$-periodic and this direction is at the valence-4 vertex of $\G$, and
\item the nonperiodic direction is contained in precisely 1 turn taken by $g$.
\end{enumerate}
\end{lem}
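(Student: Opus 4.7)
The plan combines the counting forced by principality with the flexibility of partial-fold conjugation. Since $\vphi$ is principal, any train track representative $g$ is PNP-free, so $\IW(\vphi)\cong\bigsqcup_{v\in V\Gamma}\SW(g;v)$ (with 2-vertex components removed) equals a disjoint union of $2r-3$ triangles.

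I would begin by extracting rigid combinatorial constraints. Let $P$ denote the number of periodic directions of $g$ and $m$ the number of 2-vertex components deleted to form $\IW(\vphi)$ from $\bigsqcup_v\SW(g;v)$. Since $\IW(\vphi)$ has $6r-9$ vertices, $P=6r-9+2m$. Combining with $b_1(\Gamma)=r$, with $\sum_v\deg(v)=2|E\Gamma|=2|V\Gamma|+2r-2$, and with $\deg(v)\geq 3$, only three configurations survive: (A) all vertices trivalent with $3$ non-periodic directions and $m=0$; (B) all vertices trivalent with $1$ non-periodic direction and $m=1$; (C) exactly one valence-$4$ vertex with the rest trivalent, a single non-periodic direction, and $m=0$. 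In configuration (C), the non-periodic direction must lie at the valence-$4$ vertex (otherwise a trivalent vertex would contribute a 2-vertex $\SW$ component, forcing $m\geq 1$), and each $\SW(g;v)$ must be a triangle because it has $3$ vertices and $\IW(\vphi)$ admits no non-triangle components. Hence (C) yields (a)--(c) at once.

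Next I would show that representatives in configurations (A) or (B) are partial-fold conjugate to representatives in configuration (C). The key observation is that (A) and (B) correspond to snapshots of the periodic fold line at moments other than ``just after a proper full fold.'' Using the uniqueness of the Stallings fold decomposition of a principal representative (from \cite{loneaxes}) and subdividing the relevant fold, the partial-fold-conjugacy diagrams (\ref{b}) or (\ref{c}) with $j=0$ shift the starting point to precisely the moment when the valence-$4$ vertex and its new non-periodic direction first appear, giving configuration (C).

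Finally, for (d), I would use that immediately after a proper full fold of $e_1$ over $e_0$, the newly-created non-periodic direction $d$ has had no time to accumulate $g$-taken turns beyond the single one produced by the fold itself, so $d$ lies in exactly one turn of the image of some edge under $g$. The main obstacle is this shifting step: verifying rigorously that the partial-fold freedom always suffices to reach configuration (C) and that the $g$-taken turn count at the non-periodic direction collapses to exactly one at that moment. This requires careful tracking of how the ltt structure evolves along the unique Stallings fold decomposition of the principal representative, combined with the connectivity of $\LW(g;v)$ guaranteed by the Full Irreducibility Criterion (Proposition~\ref{prop:FIC}).
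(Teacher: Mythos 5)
Your counting argument is a genuinely different route from the paper's. Where the paper passes to the rotationless power $\vphi^R$ and directly cites \cite[Corollary 3.8]{loneaxes} to identify the single nonperiodic vertex and perform the folding that produces the valence-4 graph, you instead combine the vertex count of $\IW(\vphi)$ with the Euler-characteristic constraint to deduce that $P = 6r-9+2m$, and then enumerate which valence sequences and values of $m$ are numerically possible. The enumeration into your configurations (A), (B), (C) is correct, and your deduction that configuration (C) forces (a)--(c) (e.g.\ the nonperiodic direction must sit at the valence-4 vertex because otherwise a trivalent vertex would have only 2 gates, contributing either a forbidden 2-vertex component or a forbidden 1-vertex component to $\IW$) is sound. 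This gives a more self-contained picture of what the possible snapshots along the axis can look like, which the paper does not make explicit.

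However, two steps remain genuine gaps, not merely omitted details. First, the claim that every representative in configuration (A) or (B) is partial-fold conjugate to one in configuration (C) is asserted, not proved. You say the key observation is that (A)/(B) are ``snapshots at the wrong moment,'' but nothing in your argument shows that the first fold out of a trivalent graph is a proper full fold (as opposed to a complete fold fully identifying two edges, which would produce a valence-5 vertex and fall outside all three configurations), nor that the shift lands at exactly the right moment of the fold line. The paper handles this by invoking \cite[Corollary 3.8]{loneaxes} to produce the specific fold at the nonperiodic vertex and by arguing that this fold occurs at the start of the shared (unique) Stallings fold decomposition of both $h$ and $h^R$; you would need to supply something playing that role. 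Second, the heuristic for (d) --- ``the newly-created nonperiodic direction has had no time to accumulate taken turns beyond the single one from the fold'' --- does not track the actual definition being used. The relevant turns are the edges of the local Whitehead graph, i.e.\ the $g^k$-taken turns, and there is no a priori reason that only one such turn ever occurs at the new direction: later iterates of $g$ could carry edges over that direction and introduce additional turns. The paper instead derives (d) from \cite[Lemma 2]{automaton} together with the observation that each periodic direction already lies in exactly 2 turns of its triangle, so the ``extra'' turn singled out by that lemma must contain the unique nonperiodic direction. Your proposal needs either that citation or a substitute argument that actually controls the orbit of turns under $Dg$, not just the first fold.
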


\begin{proof}

Suppose $h\from \G'\to\G'$ is a tt representative of $\vphi$ and let $R$ denote the rotationless power of $\vphi$. Then $h^R\from \G'\to\G'$ is a tt representative of $\vphi^R$, and each $h$-periodic vertex and direction is fixed by $h^R$. By the proof of \cite[Corollary 3.8]{loneaxes}, $h^R$, and hence also $h$, has at most 1 nonperiodic vertex and that vertex has precisely 2 gates. As in the proof of \cite[Corollary 3.8]{loneaxes}, by performing a fold at that vertex, one obtains a tt representative $g'$ of $\vphi^R$ for which all vertices are fixed and have at least $3$ fixed directions. Since $\vphi$ and $\vphi^R$ are lone axis fully irreducible outer automorphisms, this fold is within their shared axis $\mA$.

Because $\mA$ is both $h$- and $h^R$-periodic, with both periods starting and ending at (re-marked copies of) $\G'$, the \cite[Corollary 3.8]{loneaxes} fold, let us call it $f$, is a fold of $\G'$ occurring at the start of the Stallings fold decompositions of both $h$ and $h^R$. If $f$ is part of a subdivided fold (in the sense of \S \ref{ss:FoldConjugate}), since it occurs within the axis, there is some full fold $F$ in $\mA$ and partial fold $f'$ in $\mA$ so that $F=f\circ f'$. When you do the folds cyclically, since the decomposition of $h$ lies on $\mA$, the copy of $f'$ at the end of the decomposition of $h$ must compose with $f$ at the beginning of the next iteration of $h$ to give $F$. Thus, $f'$ would appear at the end of the Stallings fold decompositions of $h$ and $h^R$. Hence, in fact, letting $g$ denote the conjugation of $h$ by the fold, i.e. $f \circ h= g\circ f$, we have $g'=g^R$.

We now show that $g$ is a tt representative of $\vphi$. Since $g'=g^R$ is a tt map, no power of $g$ can cause backtracking on an element of $E\G$. Since quotienting by a partial fold cannot change the induced map of fundamental groups, we are left to show that $g$ maps vertices to vertices. But each vertex has at least 3 $g'$-gates, so cannot be mapped within its Stallings fold decomposition to a nonvertex point.

Since $\vphi\in\out$ is principal, its ideal Whitehead graph $\IW(\vphi)$, hence also $\IW(\vphi^k)$ for each $k\in\ZZ_{>0}$, is the disjoint union of $2r-3$ triangles. By \cite[Lemma 4.5]{loneaxes} and since $\IW(\vphi)=\IW(\vphi^k)$ has no cut vertices for each $k\in\ZZ_{>0}$, no tt representative $\tau$ of any $\vphi^k$ has a PNP. Thus, since $IW(\vphi^k)$ is the disjoint union of the $SW(\tau,v)$ having at least 3 vertices, for any such $\tau$ and vertex $v$ with at least $3$ periodic directions, $SW(\tau,v)$ is a triangle. Since each vertex of $g^R$ has at least $3$ fixed directions, each vertex of $g$ has at least $3$ periodic directions. Hence, each $SW(g,v)$ is a triangle and there are $2r-3$ such triangles, proving (a).

We now prove (b)-(c). Since each $SW(g,v)$ is a triangle, there are precisely 3 periodic directions (hence gates) at each vertex $v\in V\G$. By \cite[Lemma 3.6]{loneaxes}, $g^R$, hence also $g$, has precisely one illegal turn. Thus, $\G$ has precisely one $g$-nonperiodic direction. And we have that $\G$ has a unique vertex of valence $4$, and all other vertices have valence $3$.

Part (d) follows from \cite[Lemma 2]{automaton}, after observing that each periodic direction must be contained in 2 turns (by the structure of the stable Whitehead graphs) so that the turn described in \cite[Lemma 2]{automaton} must contain the unique nonperiodic direction. \qedhere

\end{proof}

\bigskip

We now define the rank-3 principal stratum automaton $\widehat{\mA_3}$:
  
\begin{df}[Rank-3 Principal Stratum Automaton $\widehat{\mA_3}$]
The \emph{Rank-3 Principal Stratum Automaton} $\widehat{\mA_3}$ is obtained from the ``Lonely Direction Automaton'' of \cite{automaton} by adding a bi-directed edge labeled with $\sigma$ for each ltt structure permutation relabeling $G\to\sigma\cdot G$, which implicitly records the directed edge $\sigma$, labeling the edge $[G,\sigma\cdot G]$, and $\sigma^{-1}$, labeling the edge $[\sigma\cdot G, G]$. For each $r\geq 3$, the \emph{Rank-r Principal Stratum Automaton} $\widehat{\mA_r}$ is defined analogously.
\end{df} 

Note that, once a choice of labels has been made for the black edges of a single node, this determines the labeling of the black edges in each other node, as well as the labels on the directed edges. In order to explicitly write out $\widehat{\mA_3}$, we make such a choice and call the decoration-preserving graph isomorphism class of $\widehat{\mA_3}$, endowed with such a choice of labeling, a \emph{graph-labeling class}.

\bigskip

Figure \ref{f:mA3} depicts $\widehat{\mA_3}$ with the permutation edges depicted in green. 
The nodes and black directed edges are as described in \S\ref{ss:A3}. (Strictly speaking, what is shown is a maximal strongly connected component of $\widehat{\mA_3}$; in rank 3 there is only one such component containing a loop, up to permutation relabeling.)
Following a directed loop thus gives a sequence of folds and permutations which could be a decomposition of a principal fully irreducible outer automorphism, though one can easily produce directed loops that do not give such elements. For example, the folds represented by the directed edges of the loop may be supported in a proper subgroup of the fundamental group (so that the map will be reducible).

\subsection{Loops in $\widehat{\mA_3}$}
	\label{ss:A3hatLoops}  

Consider a directed loop in the automaton with the edge labels reading as follows (the $f_k$ are folds and the $\sigma_k$ are graph-relabeling permutations):

\vspace*{-5mm}

\begin{equation}\label{decomp}
\G_{0}' \xrightarrow{g_{\sigma_0}} \G_{0} \xrightarrow{f_1} \G_{1}' \xrightarrow{g_{\sigma_1}} \G_{1} \xrightarrow{f_2} \G_{2}' \xrightarrow{g_{\sigma_2}} \G_{2} \xrightarrow{f_3} \cdots \xrightarrow{g_{\sigma_{n-1}}} \G_{n-1} \xrightarrow{f_n} \G_{n}' \xrightarrow{g_{\sigma_{n}}} \G_{n}=\G.
\end{equation}

Fixing a labeling on $\Gamma_0'$, we induce labelings on all subsequent $\Gamma_i$ and $\Gamma_i'$. Using the notation described in Definition~\ref{d:relabeling} we can then define:
\begin{equation}\label{gammaj}
\Gamma_k^{(j)}:=(\sigma_k\circ\cdots\circ\sigma_{k-j})^{-1}\cdot\Gamma_k\quad \text{and}
\end{equation}

\begin{equation}\label{fj}
f_k^{(j)}:= 
g_{\sigma_{k-j-1}\circ\cdots\circ\sigma_0}^{-1} 
\circ f_k \circ 
g_{\sigma_{k-j-1}\circ\cdots\circ\sigma_0}
\from \Gamma_{k-1}^{(j)} \to \Gamma_{k}^{(j+1)}.
\end{equation}

\smallskip

\noindent We have the following commutative diagram indicating how the map of (\ref{decomp}) can be rewritten as folds and then possibly a single ltt structure permutation relabeling. 
\begin{equation}\label{commutative1}
\begin{tikzcd}
		&&&&&& \Gamma_n=\Gamma_0 
\\
		&&&&&\Gamma_{n-1} \ar{r}{f_n} & \Gamma_n' \ar{u}{g_{\sigma_n}} \\
&&&&&&\\
		&&&	\Gamma_2 \ar[dashed]{rr} && \Gamma_{n-1}^{(n-3)} \ar[dashed]{uu} \ar{r}{f_n^{(n-3)}} & \Gamma_n^{(n-2)} \ar[dashed]{uu} \\
		&&	\Gamma_1 \ar{r}{f_2} & \Gamma_2' \ar{u}{g_{\sigma_2}} \ar[dashed]{rr} && \Gamma_{n-1}^{(n-2)} \ar{u}{g_{\sigma_2}} \ar{r}{f_n^{(n-2)}} & \Gamma_n^{(n-1)} \ar{u}{g_{\sigma_2}} \\
		&	\Gamma_0 \ar{r}{f_1} & \Gamma_1' \ar{u}{g_{\sigma_1}} \ar{r}{f_2'} & \Gamma_2'' \ar{u}{g_{\sigma_1}} \ar[dashed]{rr} && \Gamma_{n-1}^{(n-1)} \ar{u}{g_{\sigma_1}} \ar{r}{f_n^{(n-1)}} & \Gamma_n^{(n)}  \ar{u}{g_{\sigma_1}} \\
		 & \Gamma_0' \ar{u}{g_{\sigma_0}} \ar{r}{f_1'} & \Gamma_1'' \ar{u}{g_{\sigma_0}} \ar{r}{f_2''} & \Gamma_2''' \ar{u}{g_{\sigma_0}} \ar[dashed]{rr} && \Gamma_{n-1}^{(n)} \ar{u}{g_{\sigma_0}} \ar{r}{f_n^{(n)}} & \Gamma_n^{(n+1)} \ar{u}{g_{\sigma_0}}
	\end{tikzcd}
\end{equation}

\bigskip

We strengthen here \cite[Proposition 6]{automaton}:

\setcounter{thmx}{2}
 \begin{thmx}\label{p:PrincipalGivesLoop}
Suppose $g$ is a train track representative of a principal fully irreducible $\vphi\in\outt$.
Then the Stallings fold decomposition of $g$ is partial-fold conjugate to one determining a directed loop in $\widehat{\mA_3}$.
\end{thmx}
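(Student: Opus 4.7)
The plan is to apply Lemma~\ref{lemma:LonelyDirection} to reduce to a train track representative with the lonely direction properties, and then recover the loop structure by passing to the rotationless power and invoking \cite[Proposition 6]{automaton}, carefully tracking the permutation relabelings that arise.

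First I would apply Lemma~\ref{lemma:LonelyDirection} to replace $g$ by a partial-fold conjugate $g\colon \Gamma\to\Gamma$ satisfying the lonely direction properties (a)--(d). Let $R$ denote the rotationless power for rank~$3$, so that $g^R\colon\Gamma\to\Gamma$ is a train track representative of the rotationless principal $\vphi^R$. Write the Stallings fold decomposition of $g$ as
\[
\Gamma=\Gamma_0\xrightarrow{f_1}\Gamma_1\xrightarrow{f_2}\cdots\xrightarrow{f_n}\Gamma_n,
\]
with $\Gamma_n$ identified with $\Gamma_0=\Gamma$ via a graph isomorphism which, after applying a graph-relabeling permutation $\sigma$ of edges, becomes edge-label-preserving.

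Next I would argue that the Stallings fold decomposition of $g^R$ is obtained by concatenating $R$ re-labeled copies of the above decomposition of $g$, with consecutive copies glued through (conjugates of) $\sigma$. Because $g^R$ is rotationless, the total accumulated edge-label permutation after $R$ copies is trivial, so the concatenated sequence really is a Stallings fold decomposition of $g^R$. Applying \cite[Proposition 6]{automaton} to $g^R$ then shows this decomposition determines a directed loop in the Lonely Direction Automaton. Reading off this loop, each intermediate graph appearing is (up to a label-permutation) a lonely direction ltt structure, and each fold corresponds to a fold-arrow. Consequently, each $\Gamma_k$ from the decomposition of $g$---equipped with the ltt-structure data induced by iterating $g$---is a node of $\widehat{\mA_3}$, and each $f_k$ is a fold-arrow of $\widehat{\mA_3}$.

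Finally, the closing identification $\sigma\colon\Gamma_n\to\Gamma_0$ is by construction an ltt-structure permutation relabeling, hence corresponds to a permutation-arrow of $\widehat{\mA_3}$. Concatenating the fold-arrows $f_1,\dots,f_n$ with this permutation-arrow $\sigma$ thus yields a directed loop in $\widehat{\mA_3}$, and this loop is by definition determined by the Stallings fold decomposition of the partial-fold conjugate $g$.

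The main obstacle I anticipate is the middle step: making precise the claim that the Stallings fold decomposition of $g^R$ is the clean $R$-fold concatenation of $g$'s decomposition glued by copies of $\sigma$, with no cancellation or forced subdivision arising from the composition. This is a bookkeeping issue about how the Stallings-fold algorithm interacts with composition of train track self-maps, and the lonely direction properties of $g$---particularly (d), which ensures that consecutive folds across a $\sigma$-gluing do not backtrack---should be exactly what lets the concatenation go through cleanly. A secondary concern is verifying that $\sigma$ has order dividing $R$, so that $\sigma^R$ is trivial on edge labels; this follows from the fact that $\sigma$ permutes gates and that every gate is fixed by $g^R$.
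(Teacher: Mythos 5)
Your overall route is the paper's route: reduce to the lonely-direction form via Lemma~\ref{lemma:LonelyDirection}, pass to a power, apply \cite[Proposition 6]{automaton} to that power, and then recover a loop in $\widehat{\mA_3}$ from a loop in $\mA_3$ plus a closing permutation edge. However, the two places you flag as concerns are precisely where your sketch differs from, and falls short of, what the paper actually does.

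First, the ``secondary concern.'' You assert that $\sigma^R=\mathrm{Id}$ because ``$\sigma$ permutes gates and every gate is fixed by $g^R$.'' This does not follow. The rotationless power $R$ controls the direction map $Dg$, which is the \emph{composition} of the direction maps of all the Stallings folds together with $Dg_\sigma$; it does not directly control the terminal relabeling permutation $\sigma$ in isolation. There is no reason $\sigma$'s order should divide $R$, and the paper does not claim it does. Instead, the paper simply observes that, since $\vphi^{np}$ is still principal for any $n\geq 1$, one may replace $p$ by a common multiple of $p$ and the order of $\sigma$, making the accumulated relabeling trivial (``If necessary, we can increase $p$ so that $\sigma^p$ is the identity''). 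You should adopt that fix rather than trying to deduce $\sigma^R=\mathrm{Id}$ from rotationlessness.

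Second, the ``main obstacle.'' Your proposed resolution --- that property (d) of Lemma~\ref{lemma:LonelyDirection} ``ensures that consecutive folds across a $\sigma$-gluing do not backtrack'' --- is not the argument that actually closes this gap, and I don't see how (d) alone would. A priori the Stallings-fold algorithm applied to $g^p$ need not produce the concatenation of $p$ relabeled copies of $g$'s decomposition; one must rule out any refolding or reorganization when the copies are glued. The paper's resolution is structural: the concatenated sequence, after pushing all the $g_\sigma$'s to the end via the commutative diagram (\ref{commutative1}), \emph{is} a valid Stallings fold decomposition of $g^p$, and because $g^p$ represents a principal (hence lone-axis) element, its Stallings fold decomposition is \emph{unique} by \cite[Theorem 4.7]{loneaxes}. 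Uniqueness then forces this to be \emph{the} decomposition, and the rest of the argument proceeds. You should invoke that uniqueness rather than an ad hoc no-backtracking claim.

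A smaller organizational difference: you front-load Lemma~\ref{lemma:LonelyDirection} to normalize $g$ so that $\Gamma_0$ already has the valence-$4$ vertex; the paper instead keeps an explicit case split (valence-$4$ vertex present vs.\ $\Gamma_0$ trivalent) and, in the trivalent case, shows the first fold must be a proper full fold producing a valence-$4$ vertex at $\Gamma_1$, giving the partial-fold conjugation. Either organization is acceptable, but if you normalize up front you should say explicitly that you are free to replace $g$ by a partial-fold conjugate and that the conclusion is invariant under that replacement.
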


\begin{proof}
Suppose $g$ is a tt representative of a principal fully irreducible $\vphi\in\outt$. By \cite[Proposition 6]{automaton}, some power $g^p$ of $g$ is fold-conjugate (really partial-fold conjugate) to a map given by a directed loop in $\mA_3$, and we explain how to fine-tune the proof of \cite[Proposition 6]{automaton} for this level of precision that we need. Since tt representatives of the same principal (hence lone axis) outer automorphism have Stallings fold decompositions yielding the same axis, they are in fact fold-conjugate through folds of the axis. Hence, if the folds for one of these tt representatives is represented by a loop in ${\mA_3}$, then so is the other, provided they both start and end on the kinds of ltt structures represented in the automata, i.e those whose underlying graph satisfies the ``Lonely Direction property'' of \cite{automaton}. This is addressed in Lemma \ref{lemma:LonelyDirection}.

Since $\vphi$ is principal, hence lone axis, $g$ has a unique Stallings fold decomposition 
\begin{equation}\label{PrincipalDecomp}
\G=\G_{0} \xrightarrow{g_1} \G_{1} \xrightarrow{g_2} \G_2 \xrightarrow{g_3} \cdots \xrightarrow{g_{t-2}} \G_{t-1} \xrightarrow{g_{t-1}} \G_t \xrightarrow{g_{\sigma}} \G_{0}=\G.
\end{equation}

\noindent If necessary, we can increase $p$ so $\sigma^p$ is the identity. Since $g^p$ can be decomposed as
\begin{equation}\label{PrincipalDecompPower}
(\G_{0} \xrightarrow{g_1} \G_{1} \xrightarrow{g_2} \G_2 \xrightarrow{g_3} \cdots \xrightarrow{g_{t-2}} \G_{t-1} \xrightarrow{g_{t-1}} \G_t \xrightarrow{g_{\sigma}}\G_{0})^p,
\end{equation}
\noindent in light of (\ref{commutative1}), we obtain a Stallings fold decomposition of $g^p$ as
\begin{multline}\label{PrincipalDecompPowerClean}
\G=\G_{0} \xrightarrow{g_1} \G_{1} \xrightarrow{g_2} \cdots \xrightarrow{g_{t-2}} \G_{t-1} \xrightarrow{g_{t-1}} \G_t=\G_{0}' \xrightarrow{g_1'} \G_{1}' \xrightarrow{g_2'} \cdots \xrightarrow{g_{t-2}'} \G_{t-1}' \xrightarrow{g_{t-1}'} \G_t'=\G_{0}'' \cdots\\
\cdots\xrightarrow{g_{t-1}^{(p-2)}} \G_t^{(p-2)}=\G_{0}^{(p-1)} \xrightarrow{g_1^{(p-1)}} \G_{1}^{(p-1)} \xrightarrow{g_2^{(p-1)}} \cdots \xrightarrow{g_{t-2}^{(p-1)}} \G_{t-1}^{(p-1)} \xrightarrow{g_{t-1}^{(p-1)}} \G_t^{(p-1)}\xrightarrow{g_{\sigma}^p=Id} \G_{0}=\G.
\end{multline}

\noindent Since $g^p$ must also be principal, hence lone axis, this is the unique Stallings fold decomposition for $g^p$, so must be partial-fold conjugate to a fold decomposition represented by a directed loop in $\mA_3$. 

If $\G_{0}$ has a valence-4 vertex then, by the proof of  \cite[Proposition 6]{automaton} and the first paragraph of this proof, $g^p$ itself is represented by a directed loop in $\mA_3$. Now (\ref{PrincipalDecomp}) is, but for the addition of $g_{\sigma}$, a subpath of the directed loop  (\ref{PrincipalDecompPowerClean}) in $\mA_3$. And the Rank-3 Principal Stratum Automaton $\widehat{\mA_3}$ is obtained from $\mA_3$ by adding in edges for all possible ltt structure permutation relabelings, closing up the path via $g_{\sigma}$ to a loop. So $g$ will be represented by a directed loop in $\widehat{\mA_r}$. 
   
Now suppose that $\G_{0}$ is fully trivalent. As in (\ref{commutative1}), 
$$\G_{0} \xrightarrow{g_1} \G_{1} \xrightarrow{g_2} \G_2 \xrightarrow{g_3} \cdots \xrightarrow{g_{t-2}} \G_{t-1} \xrightarrow{g_{\sigma} }\sigma\cdot\G_{t-1} \xrightarrow{g_{\sigma}\circ g_{t-1}\circ g_{\sigma}^{-1}} \sigma\cdot\G_t=\G_{0}$$ 
composes to be the same map as (\ref{PrincipalDecomp}). We can see that $\G_1$ has a valence-4 vertex as follows. Complete the fold of the illegal turn at $\G_0$. If it fully identifies two edges 
\vspace*{-4mm}
\parpic[r]{\includegraphics[width=1.5in]{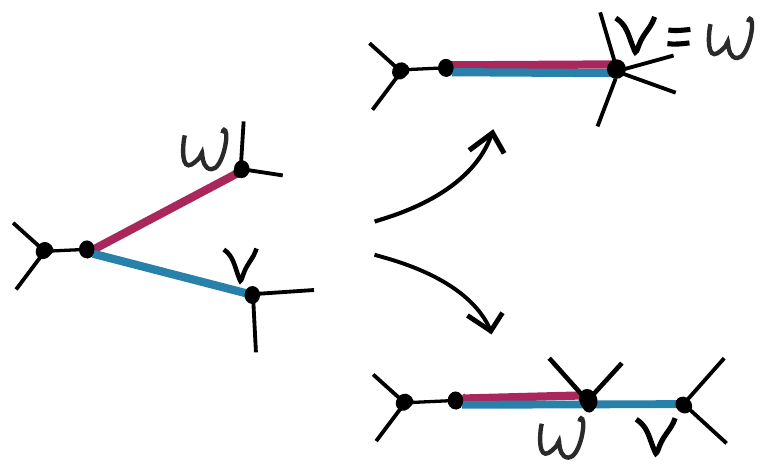}} 
\noindent (upper right-hand image), the terminal vertex would have valence-5 and neither $g$ nor its fold-conjugates could have represented a principal $\vphi\in \outt$, a contradiction. So $g_1$ must be a proper full fold (lower right-hand image) and $\G_1$ now has a valence-4 vertex at the image of $w$. So the previous paragraph indicates that $\widehat{\mA_3}$ contains a directed loop representing 

$$ \G_{1} \xrightarrow{g_2} \G_2 \xrightarrow{g_3} \cdots \xrightarrow{g_{t-2}} \G_{t-1} \xrightarrow{g_{\sigma}} \sigma\cdot\G_{t-1} \xrightarrow{g_{\sigma}\circ g_{t-1}\circ g_{\sigma}^{-1}} \sigma\cdot\G_t=\G_{0} \xrightarrow{g_1} \G_{1}.$$ 
So in this case also the Stallings fold decomposition of $g$ is partial-fold conjugate to one determining a directed loop in $\widehat{\mA_3}$. \qedhere

\end{proof}

\bigskip

\begin{prop}\label{lemma:map3}
	Each principal fully irreducible $\vphi \in \outt$ contains in its Stallings fold decomposition the left-hand graph. Thus, each principal axis must pass through the simplex with underlying graph both the second and third graphs in the image.

\begin{center}
\includegraphics[height=.4in]{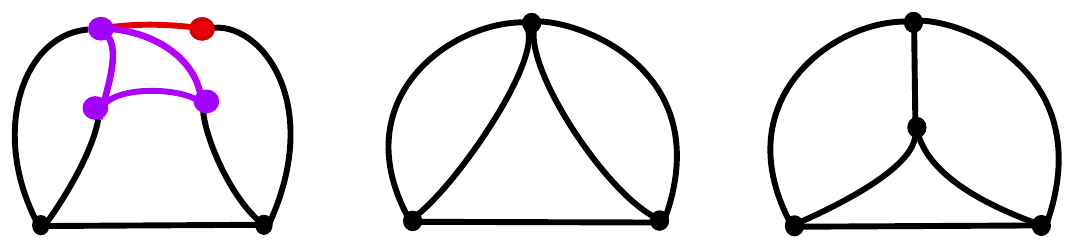}
\end{center}
\end{prop}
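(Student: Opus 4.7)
The plan is to apply Theorem~\ref{p:PrincipalGivesLoop} to reduce the claim to a combinatorial inspection of the automaton $\widehat{\mA_3}$, and then to do a short case check on the nodes and loops of that automaton. Namely, since every principal fully irreducible $\vphi\in\outt$ has a train track representative whose Stallings fold decomposition is partial-fold conjugate to a directed loop in $\widehat{\mA_3}$, it suffices to show that every such directed loop passes through a node whose underlying graph is the left-hand graph in the image.

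First I would enumerate the possible underlying graphs of nodes in $\widehat{\mA_3}$. By Lemma~\ref{lemma:LonelyDirection}, any such node is an ltt structure over a rank-3 graph with exactly one valence-4 vertex, all others trivalent, and a unique nonperiodic direction sitting at the valence-4 vertex. Up to graph-isomorphism there are only a handful of such rank-3 graphs, and I would tabulate them (together with the permissible placements of the unique nonperiodic direction). In particular I would locate the left-hand graph of the image among these and identify the subset $\mathcal{N}\subseteq\widehat{\mA_3}$ of nodes with that underlying graph.

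Second, I would inspect Figure~\ref{f:mA3} and verify by a finite case check that every directed loop in $\widehat{\mA_3}$ visits at least one node in $\mathcal{N}$. The strategy I would follow is to show that deleting the nodes of $\mathcal{N}$ (and their incident edges) leaves an acyclic directed subgraph; equivalently, one checks that the remaining nodes, viewed together with their outgoing fold/permutation edges, cannot be assembled into a cycle. Thanks to the restrictions coming from the Lonely Direction Property (parts (c)--(d) of Lemma~\ref{lemma:LonelyDirection}), the set of admissible outgoing folds at each non-$\mathcal{N}$ node is very limited, so this reduces to tracking a small number of possibilities.

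Finally, I would deduce the axis statement as follows. Using Skora's realization of a Stallings fold decomposition as a continuous fold path in $\os$ (\S\ref{ss:os}), the axis passes through the open simplex of each underlying graph $\G_i$ appearing in the decomposition. The two remaining graphs in the image are obtained from the left-hand graph by collapsing each of the two edges whose endpoints form the illegal turn at the valence-4 vertex; they correspond to the codimension-1 faces the fold path traverses immediately before and after visiting the left-hand graph's simplex. Since the left-hand graph appears in every such decomposition by the previous steps, the axis must pass through the orbits of those two adjacent simplices as well. The main obstacle I expect is Step~2: carrying out the finite but nontrivial cycle analysis in $\widehat{\mA_3}$ to confirm that no directed loop avoids $\mathcal{N}$.
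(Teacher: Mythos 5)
Your Step~1 (the reduction to directed loops in $\widehat{\mA_3}$ via Theorem~\ref{p:PrincipalGivesLoop}) matches the paper, but Step~2 contains a genuine gap. You propose to verify that deleting the set $\mathcal{N}$ of nodes whose underlying graph is the left-hand graph yields an \emph{acyclic} directed subgraph, so that no directed loop can avoid $\mathcal{N}$. This is not what happens in $\widehat{\mA_3}$: after Node~I is removed there remains a nontrivial maximal strongly connected component $\mM_I$, so there \emph{are} directed loops in the complement, and a pure cycle-search of the kind you describe will find them and have no mechanism to discard them. The missing ingredient is irreducibility. The paper observes that, in the edge-labeling of Figure~\ref{f:mA3}, no permutation inside $\mM_I$ moves the edge $c$ and no fold inside $\mM_I$ folds anything over $c$; hence any map built by following a loop in $\mM_I$ leaves the subgraph $\{c\}$ invariant, its transition matrix is reducible, and it cannot represent a fully irreducible outer automorphism. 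So the statement you need in Step~2 is not ``the complement of $\mathcal{N}$ is acyclic'' but ``no cycle in the complement of $\mathcal{N}$ yields an irreducible transition matrix,'' and establishing that requires tracking which edges the available folds and permutations can touch --- an ingredient your proposal never invokes. Without it the plan cannot close.

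A secondary issue concerns Step~3. The second graph in the image is simply the underlying graph of the Node~I ltt structure (the left-hand picture with the Whitehead-graph decoration stripped away), not a graph obtained by collapsing an edge. And the third graph is the one the Skora fold path passes through \emph{during} a proper full fold into Node~I; that graph has one more vertex and one more edge than the Node~I graph and is fully trivalent, so the Node~I simplex is a face of its simplex rather than the reverse. Your description as ``codimension-1 faces obtained by collapsing the two edges of the illegal turn'' has the containment backwards and produces the wrong graphs; the paper instead reads off the third graph by inspecting the four folds that enter Node~I.
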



\begin{proof} 
By Theorem \ref{p:PrincipalGivesLoop}, the Stallings fold decomposition of any representative $g$ of  a principal fully irreducible $\vphi \in\outt$ is partial-fold conjugate to one determining a directed loop in $\widehat{\mA_3}$.

\parpic[r]{\includegraphics[width=2.75in]{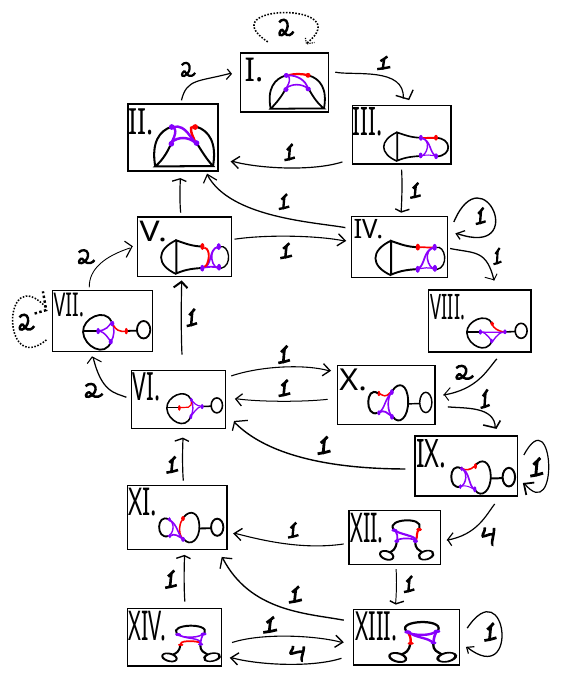}}
The image to the right schematically depicts $\widehat{\mA_3}$, with folds changing the labeling on the ltt structure included as dashed. In this figure we group together the nodes differing only by an edge-label permutation into a single node, indexed by roman numerals; this grouping is also shown in Figure~\ref{f:mA3}. 
Numbers on the arrows indicate how many distinct maps connect the isomorphism classes of ltt structures. It is interesting to note that, as drawn here and in Figure \ref{f:mA3}, $\widehat{\mA_3}$ has a clockwise orientation. 

One can then observe that, once Node I is removed, the maximal strongly connected component, which we call here $\mM_I$, includes none of Node I, Node II, or Node III. Further, in the edge-labeling scheme of Figure \ref{f:mA3}, no permutation within $\mM_I$ contains $c$ and no fold within $\mM_I$ maps $c$ over another edge. 

Hence, no directed loop within $\mM_I$ can define an irreducible map. And we thus have that any irreducible map represented by a directed loop in the automaton would need to contain Node I, which is the left-hand image in the statement of the proposition. The underlying graph of this ltt structure is the middle image. Inspection of all 4 folds entering Node I (including the self-maps), indicates that the axis must also pass through the third graph in the image.
\end{proof}

\vskip10pt


\section{The single-fold map}
\label{s:ourphi}

\noindent Consider the graph map $\mathfrak{g}$ defined by:

\vspace*{-4mm}

\begin{equation}\label{g}
\mathfrak{g} =
\begin{cases}
a \mapsto \overline{b} \\
b \mapsto \overline{d} \\
c \mapsto e \\
d \mapsto \overline{e}~\overline{c} \\
e \mapsto a\\
\end{cases}
\end{equation}

\vspace*{2mm}

\noindent In $\widehat{\mA_3}$, one can find $\mathfrak{g}$ represented by the directed loop:


 \begin{center}
\begin{figure}[H]
\includegraphics[height=1.1in]{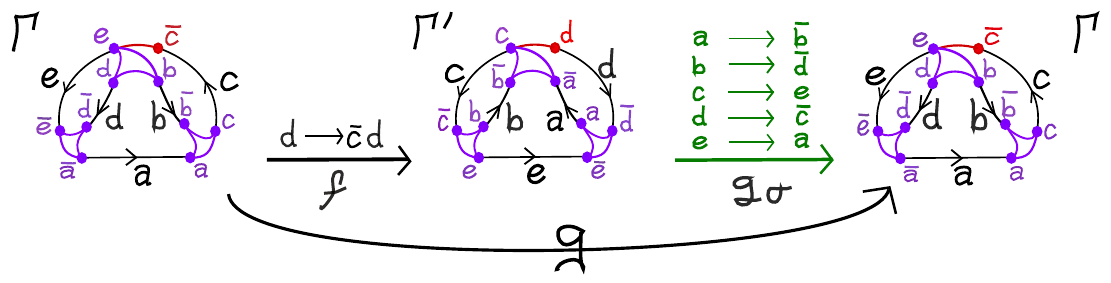}
\vspace*{4mm}
 \caption{The Stallings fold decomposition of $\mathfrak{g}$ is a fold $f$, then homeomorphism $g_{\sigma}$. This map is indicated in dotted gold in \ref{f:mA3}.} \label{f:map3}
\end{figure}
\end{center}

\vspace*{-4mm}

\begin{lem}\label{prop:map3}
	The map $\mathfrak{g}$ of Figure \ref{f:map3} represents a principal fully irreducible $\psi \in \outt$. \\
\end{lem}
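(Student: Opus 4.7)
The plan is to verify the four hypotheses of the Full Irreducibility Criterion (Proposition~\ref{prop:FIC}) for $\mathfrak{g}$ and then read off the ideal Whitehead graph to conclude $\psi$ is principal.

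First I would read the direction map off of \eqref{g}. Iterating $D\mathfrak{g}$ from $a$ produces the length-$9$ cycle
\[ a \to \bar{b} \to d \to \bar{e} \to \bar{a} \to b \to \bar{d} \to c \to e \to a, \]
and $\bar{c} \mapsto \bar{e}$ attaches the tenth direction to this cycle; in particular $\bar{c}$ is the unique non-periodic direction. The only periodic direction sharing a $D\mathfrak{g}$-image with $\bar{c}$ is $d$, so the unique illegal turn is $\{d,\bar{c}\}$, sitting at the valence-$4$ vertex of $\Gamma$. The only turn directly taken inside the image of an edge is $\{e,\bar{c}\}$, coming from $\mathfrak{g}(d) = \bar{e}\bar{c}$. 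Iterating $D\mathfrak{g}$ on this single turn enumerates all $\mathfrak{g}^k$-taken turns; none is degenerate and none equals the illegal turn $\{d,\bar{c}\}$, so $\mathfrak{g}$ is a train track map. Writing the transition matrix $M(\mathfrak{g})$ off of \eqref{g} and raising it to a small power yields a strictly positive matrix, so $M(\mathfrak{g})$ is primitive, hence irreducible and Perron--Frobenius.

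Next, combining the list of taken turns with the vertex structure of $\Gamma$ (the valence-$4$ vertex containing $\bar{c}$ together with three periodic directions, and two valence-$3$ vertices each carrying three periodic directions), I would read off the three local Whitehead graphs pictured in Figure~\ref{f:wg}. Each is a triangle on its periodic directions, with an extra edge attaching $\bar{c}$ at the valence-$4$ vertex, and each is therefore connected.

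The remaining hypothesis of Proposition~\ref{prop:FIC}, absence of periodic Nielsen paths, is the main obstacle in this plan, since it is the only step that does not reduce to direct bookkeeping on $D\mathfrak{g}$. Any PNP must cross the unique illegal turn $\{d,\bar{c}\}$, so it has the form $\bar{\gamma}_1 \gamma_2$ with $\gamma_1$, $\gamma_2$ legal paths beginning with $d$ and $\bar{c}$ respectively. Because $D\mathfrak{g}(d) = D\mathfrak{g}(\bar{c}) = \bar{e}$, iterating $\mathfrak{g}$ and tightening cancels at least the leading edge of each $\gamma_i$, while no subsequent application of $D\mathfrak{g}$ reintroduces the non-periodic direction $\bar{c}$. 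A short case check, following the deterministic (and eventually periodic) forward orbits of short extensions of $d$ and $\bar{c}$, shows that no such $\bar{\gamma}_1 \gamma_2$ is fixed up to homotopy by any iterate of $\mathfrak{g}$, so $\mathfrak{g}$ is PNP-free.

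With all hypotheses of Proposition~\ref{prop:FIC} satisfied, $\psi$ is fully irreducible. In the absence of PNPs, $\IW(\psi) = \bigsqcup_v \SW(\mathfrak{g};v)$ with two-vertex components discarded; here each of the three $\SW(\mathfrak{g};v)$ is a triangle on three periodic directions, so $\IW(\psi)$ is a disjoint union of $3 = 2r-3$ triangles, making $\psi$ principal by the definition recalled in \S\ref{ss:principal}.
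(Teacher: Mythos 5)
The proposal follows essentially the same strategy as the paper: verify the hypotheses of Proposition~\ref{prop:FIC} and then count triangles in the ideal Whitehead graph. Your derivation of the direction map, the identification of the unique illegal turn $\{d,\bar{c}\}$, the enumeration of taken turns by iterating $D\mathfrak{g}$ starting from the single directly-taken turn $\{e,\bar{c}\}$, the primitivity of $M(\mathfrak{g})$, and the local Whitehead graph check all match the paper's reasoning; if anything you are a bit more explicit about how the taken-turn list is generated, which the paper delegates to a citation.

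The one genuine gap is the PNP verification. You rightly flag this as ``the main obstacle,'' and the outline is sensible: a PNP would have to cross the unique illegal turn $\{d,\bar{c}\}$, the shared image $D\mathfrak{g}(d)=D\mathfrak{g}(\bar{c})=\bar{e}$ forces a single leading cancellation on each side, and $\bar{c}$ is never a first edge of any $\mathfrak{g}$-image so cancellation cannot propagate further. But the argument then stops at ``a short case check \dots shows that no such $\bar{\gamma}_1\gamma_2$ is fixed,'' with no bound on how long the candidate paths can be and no check actually performed. In the context of the FIC this is the load-bearing hypothesis, and it cannot be left as an assertion. The paper avoids the issue by pointing to a concrete verification tool (the Coulbois sage train-track package); if you want a self-contained argument instead, you need to explain why the search terminates --- for instance by bounding the lengths of the legal halves of a potential PNP in terms of the cancellation rate versus the Perron--Frobenius expansion, or by running the Bestvina--Handel stable Nielsen path algorithm --- and then actually carry out that finite check.
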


\begin{proof} We first use the criterion of Proposition \ref{prop:FIC} to prove that $\mathfrak{g}$ represents an ageometric fully irreducible outer automorphism $\psi$. 

The direction map $D\mathfrak{g}$ is given by:\\

\[
\begin{tikzcd}
	\overline{c} \ar{r} & \overline{e} \ar{r} & \overline{a} \ar{r} & b \ar{r} & \overline{d} \ar{r} & c \ar{r} & e \ar{r} & a \ar{r} & \overline{b} \ar{r} & d \arrow[overlay, to path={-- ([yshift=2.5ex]\tikztostart.north) -| (\tikztotarget)}, rounded corners = 5pt]{llllllll}
\end{tikzcd}\]

\vspace*{5mm}

\noindent So one can check as in \cite[Lemma 5]{automaton} that the turns taken by $\{\mathfrak{g}^p(e)\mid p\in\ZZ_{>0}, e\in E\G\}$ are:

\begin{equation}\label{turns}
\{e,\overline{c}\},~\{a,\overline{e}\},~\{\overline{b},\overline{a}\},~\{d,b\},~\{\overline{e},\overline{d}\},
~\{\overline{a},c\},~\{b,e\},~\{\overline{d},a\},~\{c,\overline{b}\},~\{e,d\}.
\end{equation}

\vspace*{5mm}

\noindent Since the only illegal turn $\{d,\overline{c}\}$ is not taken, $\mathfrak{g}$ is a tt map. Since an adequately high power of the transition map is positive, $\mathfrak{g}$ is Perron--Frobenius. Alternatively, one can note that $\mathfrak{g}^{13}(d)$ contains all edges of $\Gamma$ and $d$ is in $\mathfrak{g}^{2}(a)$, $\mathfrak{g}(b)$, $\mathfrak{g}^{4}(c)$, and $\mathfrak{g}^{3}(e)$. Thus all edges of $\Gamma$ are in $\mathfrak{g}^{15}(a)$, $\mathfrak{g}^{16}(b)$, $\mathfrak{g}^{17}(c)$, and $\mathfrak{g}^{16}(e)$. So $\mathfrak{g}^{17}$ maps every edge over every edge.

In light of (\ref{turns}), the local Whitehead graphs are triangles at each valence-3 vertex, and at the valence-4 vertex they are the union of the Figure \ref{f:map3} colored edges. Hence, each local Whitehead graph is connected. Using the sage package of Coulbois \cite{c12}, for example, one can check that there are indeed no PNPs.

By Proposition \ref{prop:FIC}, $\mathfrak{g}$ thus represents an ageometric fully irreducible outer automorphism $\psi$. Further, since all directions but $\overline{c}$ are periodic, the ideal Whitehead graph would be a union of 3 triangles. Hence, $\vphi$ is in fact principal.
\end{proof}

\vskip10pt


\section{Perron numbers \& Minimal stretch factors}
\label{s:perron}

\smallskip

A \textit{weak Perron number} is an algebraic integer $\lambda$ that is at least $|\alpha|$ for each conjugate $\alpha$ of $\lambda$.  A weak Perron number where the inequality is strict is called a \textit{Perron number}. Perron numbers are precisely the spectral radii of nonnegative aperiodic integral matrices.

We will need:

\begin{lem}\label{lemma:Perron} For each $r\geq 2$, the stretch factor of each fully irreducible $\vphi\in\out$ is a Perron number.
\end{lem}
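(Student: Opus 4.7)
The plan is to reduce to a statement about primitive nonnegative integer matrices. Fix an expanding irreducible train track representative $g\from\Gamma\to\Gamma$ of $\vphi$, with transition matrix $M$; then $\lambda(\vphi)$ is the Perron--Frobenius eigenvalue of $M$. Since $M$ is an irreducible nonnegative integer matrix, classical Perron--Frobenius theory already gives that $\lambda(\vphi)$ is a weak Perron number, so the content of the lemma is upgrading weak Perron to Perron. My plan is to do this by showing that $M$ is in fact primitive in the sense of the paper, i.e.\ that some power $M^N$ has every entry strictly positive. Once this is established, the Perron--Frobenius theorem gives that $\lambda(\vphi)$ is the \emph{unique} eigenvalue of $M$ of maximum modulus; since the minimal polynomial of $\lambda(\vphi)$ over $\mathbb{Q}$ divides the characteristic polynomial of $M$, every Galois conjugate of $\lambda(\vphi)$ is an eigenvalue of $M$, and so every conjugate $\alpha \neq \lambda(\vphi)$ satisfies $|\alpha| < \lambda(\vphi)$, which is the definition of Perron.

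The bulk of the argument is therefore to find $N$ with $g^N(e)$ traversing every $e' \in E\Gamma$ for every $e \in E\Gamma$. The plan is to invoke full irreducibility via the attracting lamination $\Lambda^+_\vphi$, which is minimal with full support on $\Gamma$ and whose generic leaf is realized (up to orientation) as a limit of iterates $g^n(e)$ for each edge $e$. Minimality forces every finite edge-word appearing in some leaf to reappear in every leaf with bounded gap, and in particular forces every edge of $\Gamma$ to appear as a subpath of every sufficiently long sub-segment of every leaf. Unwinding this into the statement $M^N > 0$ will then yield primitivity.

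The main obstacle will be making this last unwinding precise --- transferring the dynamical statement about $\Lambda^+_\vphi$ into the combinatorial statement about $M^N$. An alternative, more combinatorial route avoiding laminations is to argue by contradiction from imprimitivity: were $M$ imprimitive of period $h>1$, the edges of $\Gamma$ would partition into cyclic classes $E_0,\dots,E_{h-1}$ that are cyclically permuted by $g$ and each preserved by $g^h$. Since $\vphi^h$ is again fully irreducible, I would expect this $g^h$-invariant decomposition to yield a $\vphi^h$-invariant proper free factor system --- for example by analyzing the subgraphs carried by unions of cyclic classes --- contradicting the full irreducibility of $\vphi^h$.
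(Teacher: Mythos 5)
Your strategy matches the paper's in the key reduction: show the transition matrix $M(g)$ of an expanding irreducible train track representative is primitive, after which Perron--Frobenius theory finishes the proof. Your deduction from primitivity to ``Perron'' --- unique largest-modulus eigenvalue by the primitive Perron--Frobenius theorem, plus the observation that the minimal polynomial of $\lambda$ divides the characteristic polynomial so all Galois conjugates are eigenvalues --- is correct and in fact more self-contained than the paper's, which invokes the characterization of Perron numbers as spectral radii of nonnegative aperiodic integral matrices. Where you diverge is that the paper dispatches primitivity with a citation to \cite[Lemma 2.4(2)]{k14}, whereas you sketch two proofs. The lamination route is plausible but, as you acknowledge, the unwinding \emph{is} the entire argument. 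The imprimitivity route is closer to closed: if $M$ is irreducible with period $h>1$, the cyclic edge classes $C_0,\dots,C_{h-1}$ span proper $g^h$-invariant subgraphs $G_a$. To finish you still need two facts you gesture at but do not supply: (i) each $G_a$ contains a cycle, since a tight path in a forest is embedded and hence has length at most $|EG_a|$, contradicting $|g^{hn}(e)|\to\infty$ from the expanding hypothesis; and (ii) a proper $g^h$-invariant subgraph with a noncontractible component yields a proper $\vphi^h$-invariant free factor system, via the Bestvina--Handel correspondence (extend a spanning forest of $G_a$ to a spanning tree of $\Gamma$ and collapse; properness holds because the other $G_b$ also carry cycles by (i), so the noncontractible components of any single $G_a$ cannot account for all of $F_r$). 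Both pieces are standard and fillable --- they are exactly the content the paper's citation absorbs.
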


\begin{proof}
Suppose that $\vphi\in\out$ is a fully irreducible outer automorphism. Its stretch factor is the PF eigenvalue $\lambda(g)$ of the transition matrix $M(g)$ for any expanding irreducible tt map $g$. By definition, $M(g)$ is a nonnegative integer matrix. By \cite[Lemma 2.4(2)]{k14}, we know $M(g)$ is primitive. Hence, $M(g)$ is aperiodic. Since Perron numbers are precisely the spectral radii of nonnegative aperiodic integral matrices, $\lambda(g)$ is a Perron number, as desired.
\end{proof}

\bigskip

\setcounter{thmx}{0}
\begin{thmx}\label{t:minimal}
 The stretch factor of $\psi$ is minimal among fully irreducible elements in $\outt$. 
\end{thmx}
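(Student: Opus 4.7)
My plan is to combine Lemma~\ref{lemma:Perron} with a size bound on the transition matrix coming from the rank-$3$ constraints, enumerate the finitely many smaller Perron candidates, rule them out case by case, and then use Lemma~\ref{prop:map3} to exhibit $\psi$ as realizing the minimum.

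First, by Lemma~\ref{lemma:Perron} the stretch factor of any fully irreducible $\vphi \in \outt$ is the Perron--Frobenius eigenvalue $\lambda(g)$ of the transition matrix $M(g)$ of some expanding irreducible train track representative $g \from \G \to \G$, and is in particular a Perron number. Because $b_1(\G) = 3$ and every vertex of $\G$ has valence at least $3$, a short valence count ($|E\G| = |V\G| + 2$ together with $3|V\G| \leq 2|E\G|$) forces $|V\G| \leq 4$ and $|E\G| \leq 6$. Hence $M(g)$ is a primitive nonnegative integer matrix of size at most $6$, and $\lambda(g)$ is a Perron number of algebraic degree at most $6$.

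Next I would show that no Perron number strictly below $\lambda(\psi)$ --- the real root of $x^5-x-1$, approximately $1.167$ --- arises as such a $\lambda(g)$. The condition $\lambda(g) < 2$ together with the integrality of $M(g)$ strongly constrains the matrix shape: the entries are bounded (e.g.\ $M_{ij}M_{ji} \le \lambda(g)^2$ from inspecting $2\times 2$ principal submatrices), and every row and column has small sum, so the primitive candidates of size $\leq 6$ with PF eigenvalue strictly below $\lambda(\psi)$ form a finite and tractable set. Natural candidates to worry about include the companion matrices of $x^6-x-1$ (root $\approx 1.135$) and $x^6-x^2-1$ (root $\approx 1.151$) and a handful of close relatives. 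For each candidate matrix $M$, I would enumerate the possible rank-$3$ valence-$\geq 3$ graphs $\G$ and edge maps $g\from\G\to\G$ realizing $M$ as their transition matrix, and in each case apply the Full Irreducibility Criterion (Proposition~\ref{prop:FIC}) in contrapositive form: either no such realization exists, or the map fails the train track condition (some iterate is not tight), or some local Whitehead graph is disconnected, or the resulting outer automorphism preserves a proper free factor and so is reducible.

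The main obstacle is this exclusion step, a combinatorial but finite case analysis. The ltt-structure and Whitehead-graph machinery of Section~\ref{ss:WGs} provides the natural organizational framework; the constraint that $\lambda(g)$ be a Perron number (the root of an irreducible integer polynomial of degree $\le 6$, with all other roots strictly smaller) further prunes the list of candidate characteristic polynomials before one ever descends to graph-level bookkeeping. Once every smaller candidate is excluded, Lemma~\ref{prop:map3} together with a direct computation of the characteristic polynomial of $M(\mathfrak{g})$ (which comes out to $x^5-x-1$) supplies the matching upper bound, so $\lambda(\psi)$ is indeed minimal among stretch factors of fully irreducible elements of $\outt$.
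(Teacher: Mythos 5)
Your proposal sets up the right framework (Perron numbers, size bounds, Boyd's tables, FIC) but leaves the decisive step as an unexecuted sketch, and in a way that makes the task harder than it needs to be.

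The concrete gap: your degree bound is $\leq 6$, because the largest graphs in $\ost$ are trivalent with $6$ edges. But this leaves you having to exclude degree-$6$ Perron numbers strictly below $\lambda(\psi) \approx 1.167$ — and you correctly identify that there are some (roots of $x^6-x-1 \approx 1.135$ and $x^6-x^2-1 \approx 1.151$). You then say you ``would enumerate'' realizations and apply the FIC in contrapositive, but this case analysis is never carried out, and it is not clear it is as ``tractable'' as you assert: one must also handle candidate matrices that are not companion matrices, and there is no a priori short list of graph maps to check. As written, the exclusion of degree-$6$ candidates (and of the smaller degree-$5$ candidate $\gamma \approx 1.124$, the root of $x^5+x^4-x^2-x-1$) is a conjectural plan, not a proof.

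The paper sidesteps the degree-$6$ problem entirely with a reduction you are missing: any fully irreducible stretch factor realized by a train track map on a $6$-edge (hence trivalent) graph is also realized on a graph with strictly fewer edges. Perform the first fold of the Stallings fold decomposition; the resulting graph has a vertex of valence $\geq 4$ (valence $4$ for a proper full fold, valence $5$ if two edges are fully identified), hence by Euler characteristic fewer than $6$ edges; and fold-conjugate maps represent conjugate outer automorphisms with the same stretch factor. This pushes the degree bound to $\leq 5$, and the degree-$6$ candidates vanish without any case analysis. For the remaining degree-$5$ competitor $\gamma$, the paper uses a one-line trace argument: any $5\times 5$ integer matrix with characteristic polynomial $x^5+x^4-x^2-x-1$ has trace $-1$, which is impossible for a nonnegative transition matrix. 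You should incorporate both of these arguments; without them your proposal does not close.
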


\begin{proof}
The transition matrix for $\mathfrak{g}$ is:
\vspace{-0.2cm}
\begin{equation*}
M(\mathfrak{g}) = 
\begin{bmatrix}
0 & 0 & 0 & 0 & 1  \\
1 & 0 & 0 & 0 & 0  \\
0 & 0 & 0 & 1 & 0  \\
0 & 1 & 0 & 0 & 0  \\
0 & 0 & 1 & 1 & 0  
\end{bmatrix}.
\end{equation*}

\noindent Thus the stretch factor of $\mathfrak{g}$ is the largest real root of the characteristic polynomial of $M(\mathfrak{g})$, which is $q(x)=x^5-x-1$.

Graphs in $CV_3$ have between 3 and 6 edges. Suppose $\vphi\in\outt$ has a tt representative on a graph with 6 edges. By an Euler characteristic argument, each graph with 6 edges is trivalent. If $\vphi$ is fully irreducible, then its Stallings fold decomposition must consist of at least one fold. As in the proof of Theorem \ref{p:PrincipalGivesLoop} (and its corresponding image), the graph at the completion of the fold will either have a valence-4 vertex (if it is a proper full fold) or a valence-5 vertex (if it completely identifies 2 edges). In either case an Euler characteristic argument indicates that the number of edges would have decreased from 6. Now, fold-conjugate outer automorphisms are in the same conjugacy class, so have the same stretch factor. Thus, if, a fully irreducible stretch factor is achieved by a tt map on a 6-edge graph, it is also achieved by a train track map on a graph with fewer edges. In fact, if a given stretch factor is achieved by a principal fully irreducible outer automorphism with a tt map on a 6-edge graph, then the same can be said on some graph of fewer edges.
    
In light of the previous paragraph, since the stretch factor of any element of $\outt$ is the largest eigenvalue of the transition matrix of a tt representative, the algebraic degree of the stretch factor is between 1 and 5. Moreover, the stretch factor must be a Perron number by Lemma \ref{lemma:Perron}. The smallest few Perron numbers of degrees 2, 3, 4, and 5 are known. Approximately, these are 1.618, 1.325, 1.221, and 1.124 respectively (\cite[Table 3]{boydsupplement}). Since $\mathfrak{g}$ has stretch factor less than 1.32 and 1.22, the smallest stretch factor must be a degree-5 Perron number. 

The two smallest degree-5 Perron numbers are approximately $\gamma \approx 1.124$, and $\lambda \approx 1.167$, the largest real root of $x^5+x^4-x^2-x-1$ and $x^5-x-1$ respectively (\cite[Table 3]{boydsupplement}). Since $\mathfrak{g}$ has stretch factor exactly $\lambda$, we need only rule out $\gamma$ as a possible stretch factor. Notice that the trace of $x^5+x^4-x^2-x-1$ is $-1$. Thus any $5$ dimensional matrix with this characteristic polynomial must have negative trace. Since transition matrices have non-negative integer entries, this is impossible. Thus no element of $\outt$ has stretch factor $\gamma$. So $\mathfrak{g}$ has the minimal stretch factor among fully irreducible elements of $\outt$, as desired. \qedhere

\end{proof}

\vskip10pt



\section{Uniqueness of the single-fold principal train track map}
\label{s:single_fold}


\parpic[r]{\includegraphics[width=2in]{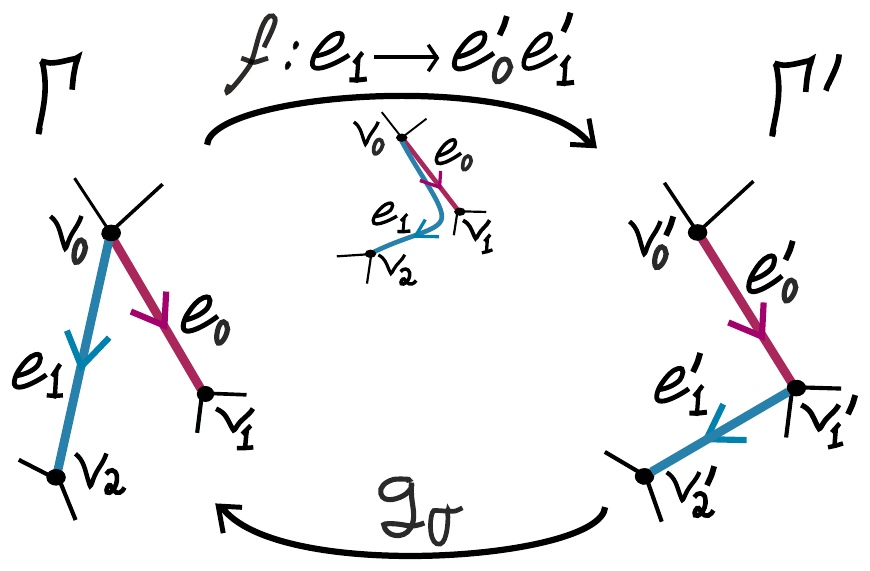}}
The goal of this section is to prove Theorem~\ref{t:unique}. 
Assume throughout that $\G$ is a graph of rank $r\geq 3$. Suppose that $h=g_{\sigma}\circ f$ is a tt representative of a principal fully irreducible $\vphi\in\out$ satisfying that $f\from\G\to\G'$ is a single proper full fold defined by $f\from e_1\mapsto e_0'e_1'$ and $g_{\sigma}\from\G'\to\G$ is a graph relabeling isomorphism. Notice that one could not have $h=g_{\sigma}\circ f$ if $f$ were instead a partial fold or a complete fold, because these types of folds change the number of vertices in a graph, and hence change the graph-isomorphism type of a graph. 

By Lemma \ref{lemma:LonelyDirection}, $\G$ will have a single valence-4 vertex and all other vertices will have valence 3. Further, the fold must occur at the unique valence-4 vertex in $\G$. Let $v_1$ denote the terminal vertex of $e_0$ and $v_2$ the terminal vertex of $e_1$, so that $e_0=[v_0,v_1]$ and $e_1 = [v_0,v_2]$. We label the vertices of $\G'$ so that $f(v) =v'$ and $e' = [v',w']$  for each $e=[v,w] \in E\G - \{e_1\}$ and $e_1' = [v_1',v_2']$.

The following lemma will help us further set notation for the graphs $\Gamma$ and $\Gamma'$, as well as aid in the proof of Theorem~\ref{t:unique}.

\begin{lem}\label{lemma:vertices}
Suppose $\G$ is a graph with rank $r$ and $h=g_{\sigma}\circ f$ is an irreducible tt representative of a principal fully irreducible $\vphi\in\out$ such that $f\from\G\to\G'$ is a single proper full fold defined by $ f \from e_1 \mapsto e_0'e_1'$  and $g_{\sigma}\from\G'\to\G$ is a graph relabeling isomorphism. Label the vertices incident to the edges $e_0, e_1, e_1'$, and $e_0'$ as described above. Then
 \begin{enumerate}[(a)]
\item $\,|\,V\G\,|\,=2r-3$ and $\,|\,E\G\,|\,=3r-4$, and
\item the vertices $v_0, v_1,$ and $v_2$ are distinct, and
\item $g_{\sigma}(v_1')=v_0$,  $g_\sigma(e_1') = e_0$, and $g_{\sigma}(v_2') = v_1$, and
\item $h$ is transitive on the vertex set of $\G$.
\end{enumerate}
\end{lem}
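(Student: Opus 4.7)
For (a), Lemma~\ref{lemma:LonelyDirection}(a)--(b) gives $|V\G|=2r-3$ (one vertex per stable Whitehead triangle) and the valence sequence ``one valence-$4$ vertex, the rest valence $3$''; the handshake lemma then yields $2|E\G|=4+3(2r-4)$, so $|E\G|=3r-4$.

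For (b), I argue by contradiction in each coincidence case. If $v_1=v_2$, then $v_1'=f(v_1)=f(v_2)=v_2'$ in $\G'$, so the terminal vertices of $e_0$ and $e_1$ are identified after the fold, violating the Stallings fold condition satisfied by the sole fold $f$ in the Stallings fold decomposition of $h$. If $v_0=v_1$, then $e_0$ is a loop at $v_0$; since $g_\sigma$ is a graph isomorphism and preserves the loop property of edges, $h(e_0)=g_\sigma(e_0')$ is again a loop in $\G$, and iterating shows $h^k(e_0)$ is always a single loop edge in $\G$, never containing the non-loop $e_1$ nor any other non-loop edge. Because $r\geq 3$ forces $\G$ to have non-loop edges, this contradicts the irreducibility of $M(h)$. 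The case $v_0=v_2$ is analogous: $e_1$ is a loop while $e_0$ is not, and since $g_\sigma$ preserves loop types, every non-loop edge $e$ satisfies that $h^k(e)$ is a non-loop edge (never $e_1$), so the $(e,e_1)$ entry of $M(h)^k$ is zero for every $k$, again contradicting irreducibility.

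For (c), the fold identifies $Df(e_0)=Df(e_1)=e_0'$, so $v_0'$ in $\G'$ drops to valence $3$, while the new midpoint vertex $v_1'$ acquires four directions, namely $\overline{e_0'}$, $e_1'$, and the two directions inherited from $v_1$; this makes $v_1'$ the unique valence-$4$ vertex of $\G'$, with all other vertices retaining their valence. Since $g_\sigma$ is a graph isomorphism and both $\G$ and $\G'$ have a unique valence-$4$ vertex, we conclude $g_\sigma(v_1')=v_0$. For $g_\sigma(e_1')=e_0$: by uniqueness of the Stallings fold decomposition of a principal fully irreducible $\vphi$, the decomposition of $h^2$ must again be two consecutive copies of the ``fold $e_1$ over $e_0$ at $v_0$'' shape. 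Rewriting the decomposition of $h^2$ by bubbling the intermediate $g_\sigma$ to the end as in~(\ref{commutative1}), the second fold must be the conjugated fold $g_\sigma^{-1}\circ f\circ g_\sigma$ at $v_1'$, folding $g_\sigma^{-1}(e_1)$ over $g_\sigma^{-1}(e_0)$; matching this with the two fold-produced directions $\overline{e_0'}$ and $e_1'$ at $v_1'$ forces $g_\sigma(e_1')=e_0$. Then $g_\sigma(v_2')$ is the terminal vertex of $g_\sigma(e_1')=e_0$, namely $v_1$.

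For (d), by (b), $f$ is injective on $V\G$, so $f\colon V\G\to V\G'$ is a bijection, and composing with the isomorphism $g_\sigma$ makes $h$ a permutation of $V\G$. Let $O$ be the $h$-orbit of $v_0$; by (c) we have $h(v_1)=v_0$, so $v_1\in O$. I show by induction on $k$ that every vertex traversed by $h^k(e_0)$ lies in $O$. For $k=0$ this is $\{v_0,v_1\}\subseteq O$. Inductively, writing $h^k(e_0)=a_1\cdots a_m$, in $h^{k+1}(e_0)=h(a_1)\cdots h(a_m)$ each junction vertex is the $h$-image of a vertex of $h^k(e_0)$ and hence in $h(O)=O$; when $a_i\neq e_1^{\pm 1}$, the image $h(a_i)$ is a single edge with no new internal vertex, while when $a_i=e_1^{\pm 1}$ the internal vertex of the $2$-path $h(e_1)=g_\sigma(e_0')g_\sigma(e_1')$ is the midpoint $g_\sigma(v_1')=v_0\in O$ by (c). By primitivity of $M(h)$, for sufficiently large $N$ the path $h^N(e_0)$ traverses every edge of $\G$, hence every vertex; therefore $V\G\subseteq O$, establishing transitivity. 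The main obstacle is pinning down $g_\sigma(e_1')=e_0$ in (c), where the periodic Stallings structure of $h^2$ must be used to constrain the relabeling.
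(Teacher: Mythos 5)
Your parts (a), (b), and (d) are correct, though they take somewhat different routes than the paper. For (a) you read off $|V\Gamma|=2r-3$ from Lemma~\ref{lemma:LonelyDirection}(a)--(b) and then use the handshake lemma, whereas the paper combines the valence data with the Euler characteristic; these are equivalent. For (b), your case $v_1=v_2$ invokes the Stallings fold condition directly (the paper instead counts single-edge loops to contradict $g_\sigma$ being an isomorphism), and your cases $v_0=v_1$ and $v_0=v_2$ track the loop/non-loop dichotomy under iteration of $h$ to violate irreducibility; this is valid, and the implicit case-ordering (so that $e_1$ is not a loop when you treat $v_0=v_1$) works because $v_1=v_2$ is eliminated first. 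For (d), your positive ``orbit of $v_0$'' argument is a nice alternative to the paper's by-contradiction argument via an $h$-invariant proper edge set $E_Y$; your accounting of the internal vertex $v_0=g_\sigma(v_1')$ when $a_i=e_1^{\pm}$ is the needed subtle point, and you handle it.

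The genuine gap is in (c), at the step $g_\sigma(e_1')=e_0$, which you flag yourself as the obstacle. Your argument asserts that the second fold in the decomposition of $h^2$, conjugated back to $\Gamma'$, must fold the two directions $\overline{e_0'}$ and $e_1'$ at $v_1'$, but no reason is given for this ``matching.'' A priori, the conjugated fold folds $g_\sigma^{-1}(e_1)$ over $g_\sigma^{-1}(e_0)$, and these are just some two of the four directions at $v_1'$; nothing you have established forces them to be $\{\overline{e_0'},e_1'\}$ rather than involving $d_3'$ or $d_4'$ (the two directions at $v_1'$ inherited from $v_1$). Moreover, even granting the matching, you would only get $g_\sigma(e_1')\in\{e_0,e_1\}$, still leaving the possibility $g_\sigma(e_1')=e_1$ open. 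The paper closes both gaps: it first uses the fact that $Dh$ has exactly one non-surjective direction (Lemma~\ref{lemma:LonelyDirection}(c)) together with the two-to-one collapse of $\{e_0,e_1\}$ under $Df$ to show that if $Dg_\sigma(e_1')\notin\{e_0,e_1\}$ then $Dh^2$ would identify a second disjoint pair of directions $\{a,b\}$ at $v_1$, contradicting $|Dh^2(\mD\Gamma)|=|\mD\Gamma|-1$; and it then rules out $g_\sigma(e_1')=e_1$ by observing this would fix $v_2$ and make the two non-$e_1$ edges at $v_2$ an $h$-invariant proper edge set, violating irreducibility. You should replace your $h^2$-fold-matching sketch with an argument of this kind.
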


\begin{proof}
We prove (a) - (d) in order, one at a time.

\smallskip
\begin{enumerate}[(a)]
\item Since the rank of $\Gamma$ is $r$, the Euler characteristic satisfies $\chi(\Gamma)=1-r$. So 
$$r-1=\,|\, E\G \,|\, - \,|\, V\G \,|\,.$$ 
Since $\G$ has a single valence-4 vertex and all other vertices are of valence 3, we have:
$$  V\G=2r-3 \quad \quad \quad \text{and} \quad \quad \quad E\G=3r-4.$$

\item If $v_0=v_1=v_2$, both $e_0$ and $e_1$ would be single-edge loops. Since valence$(v_0)=4$ and part (a) implies $\G$ must have more than one vertex,  $\G$ would be disconnected. Thus the three vertices are not all equal.  
    
\parpic[r]{\includegraphics[width=2.5in]{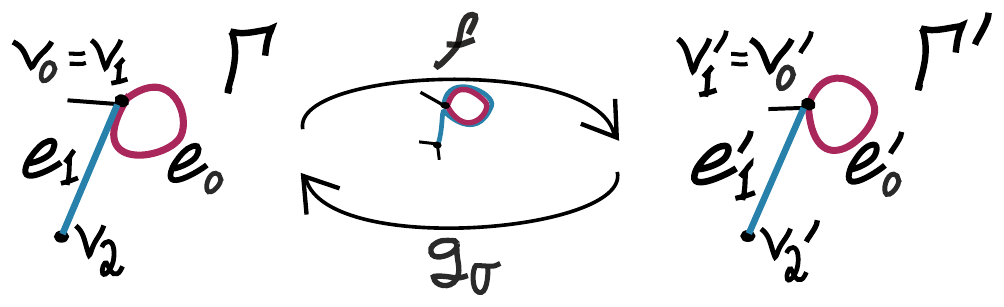}} 
If $v_0=v_1$, then $e_0$ is a single-edge loop, as is its image in $\G'$.  Since $f(v_0) = v_0'$ is the unique valence-4 vertex in $\Gamma'$, we must have $h(v_0)=g_{\sigma}(v_0')=v_0$. However, if $v_0$ is fixed by $h$, the set of edges incident to $v_0$ is $h$-invariant. Since by part (a) there are necessarily more edges in $\Gamma$ not incident to $v_0$, this contradicts the irreducibility of $h$.\\

\vspace*{-1mm}

\parpic[l]{\includegraphics[width=2.5in]{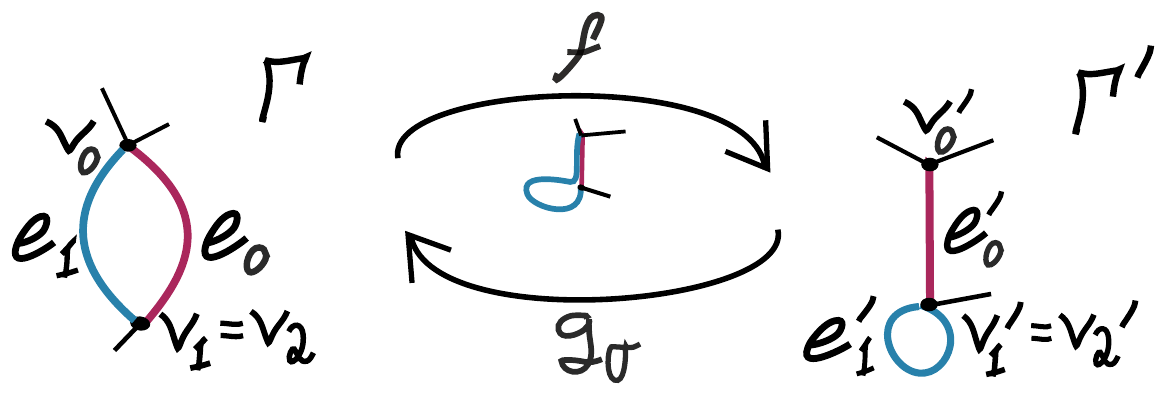}} 
 If $v_1=v_2$, then $e_1'$ is a single-edge loop in $\G'$. Now $\G'$ has one more edge which is a single-edge loop than $\G$ does, contradicting that $g_{\sigma}$ is a graph isomorphism. \\
 
 \smallskip

\parpic[r]{\includegraphics[width=2.5in]{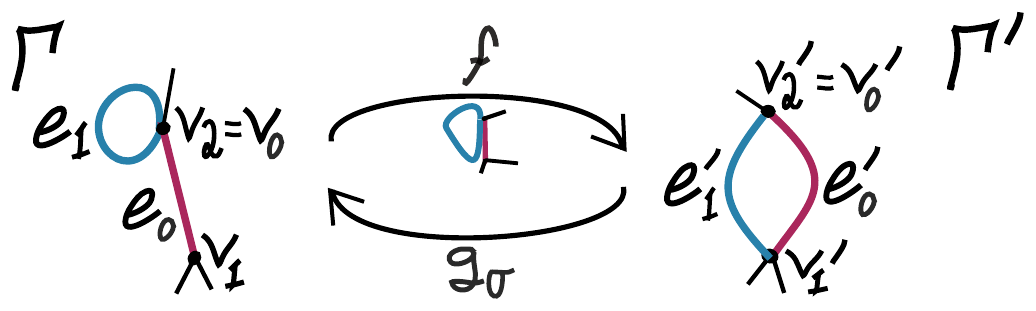}}
If $v_0=v_2$, then $e_1$ is a single-edge loop. However $e_1'$ is not a single-edge loop in $\G'$, so $\G$ has one more edge which is a single-edge loop than $\G'$, again contradicting that $g_{\sigma}$ is a graph isomorphism. \\

\vspace*{-3mm}

\item In light of (b) we have the following picture (and will show the vertex colors reflect how they are mapped):

\vspace*{-3mm}

\begin{center}
\includegraphics[width=3.75in]{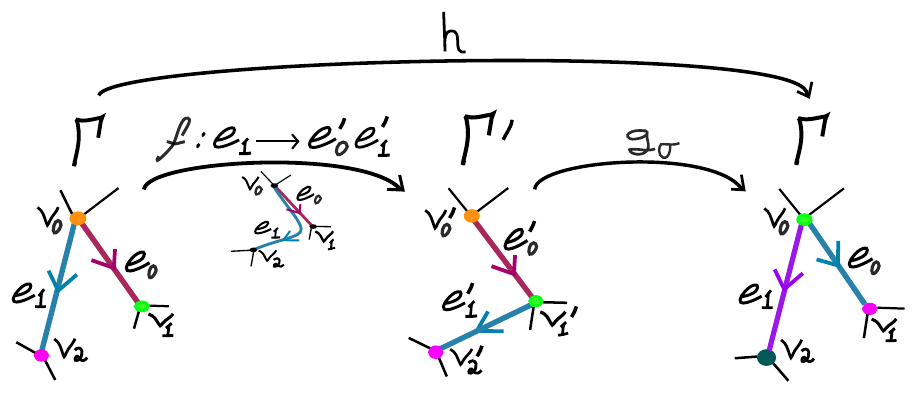}
\end{center}

Since $v_0$ is the unique valence-4 vertex of $\G$ and $v_1'$ is the unique valence-4 vertex of $\G'$ and $g_{\sigma}$ is a graph isomorphism, we must have $g_{\sigma}(v_1')=v_0$. 

By Lemma \ref{lemma:LonelyDirection}, all but one direction of $\G$ must be periodic under $h$. Hence 
\begin{equation}\label{directions}
|Dh^{k}(\mD \G)| = |\mD \G|-1
\end{equation}
for all powers $k \geq 1$. Observe that $e_1'$ is the single direction in $\G'$ that is not in the image of $Df$. 

We claim $Dg_{\sigma}(e_1') \in \{e_0,e_1\}$. Suppose not. Since $g_\sigma$ is a graph isomorphism, $Dg_{\sigma}$ is bijective on the set of directions of $\G'$. Hence there exists a pair of distinct directions $a',b' \in \mD \G'$ such that $Dg_{\sigma}(a')=e_1$ and $Dg_{\sigma}(b')=e_0$. Since $g_{\sigma}(v_1')=v_0$ and $g_{\sigma}$ is a graph-isomorphism, $a'$ and $b'$ emanate from $v_1'$. Since we assumed neither $a'$ nor $b'$ are equal to the $Df$-unachieved direction $e_1'$, the definition of $\G'$ ensures there exists a pair of directions $a,b \in \mD \G$ so that $Df(a)=a'$ and $Df(b)=b'$. Further, since $a'$ and $b'$ are incident to $v_1'$ and $f(v_1)=v_1'$, we have that $a$ and $b$ are incident to $v_1$. Thus,

 $$Dh^2(a) = Dg_{\sigma}(Df(Dg_{\sigma}(Df(a)))) = Dg_{\sigma}(Df(Dg_{\sigma}(a'))) = Dg_{\sigma}(Df(e_1)) = Dg_{\sigma}(e_0') $$
 and
  $$Dh^2(b) = Dg_{\sigma}(Df(Dg_{\sigma}(Df(b)))) = Dg_{\sigma}(Df(Dg_{\sigma}(b')))  = Dg_{\sigma}(Df(e_0)) = Dg_{\sigma}(e_0').$$
  
 \bigskip
 
\noindent Hence  $Dh^{2}(a) = Dh^{2}(b)$, which implies $|Dh^{2}(\mD \G)| \leq |\mD \G|-2$, contradicting  \ref{directions}. Thus, $Dg_{\sigma}(e_1') \in \{e_0,e_1\}$, as desired.

Since $g_{\sigma}$ is a graph isomorphism, the previous paragraph implies that $g_{\sigma}(e_1') \in \{e_1,e_0\}$. Suppose for the sake of contradiction that $g_{\sigma}(e_1')=e_1$. Then $g_{\sigma}(v_2')=v_2$, and so $h$ fixes the vertex $v_2$. Since $Dh(\ol{e_1}) =  Dg_{\sigma}(Df(\ol{e_1}))=Dg_{\sigma}(\ol{e_1}')=\ol{e_1}$, the set consisting of the remaining two edges incident to $v_2$ is invariant under $h$. This contradicts the irreducibility of $h$. Therefore $g_{\sigma}(e_1') = e_0$. 
 
Since $e_0=[v_0,v_1]$ and  $e_1'=[v_1',v_2']$ and $g_{\sigma}$ is a graph isomorphism, and we already have $g_{\sigma}(v_1')=v_0$, we know that $g_{\sigma}(v_2')$ must be the vertex of $e_0$ other than $v_0$, which is exactly $v_1$. This completes the proof of part (c). 

\smallskip

\item Suppose $h$ is not transitive on the vertex set of $\G$. Then $V\G = X \sqcup Y$ for nonempty proper subsets $X$ and $Y$, both $h$-invariant. Without loss of generality, suppose $v_2 \in X$. Let $E_Y$ be the edges in $\G$ with at least one incident vertex in $Y$. We aim to show $E_Y$ is a nonempty proper subset of $E\G$ which is invariant under $h = g_{\sigma} \circ f$, contradicting that $h$ must be an \textit{irreducible} tt map.

By part (c), $h(v_2)=g_{\sigma}(v_2')=v_1$ and  $h(v_1) = g_{\sigma}(v_1')=v_0$ . Hence both  $v_1,v_0 \in X$ by invariance of $X$.  As $Y$ is disjoint from $X$, all three vertices $v_0,v_1,v_2 \notin Y$, implying $e_0, e_1 \notin E_Y$. This guarantees that $E_Y$ is indeed a proper subset of the edge set. Clearly $E_Y$ is nonempty, since $Y$ is nonempty and every vertex in $\G$ has nonzero valence. All that remains to show is that $E_Y$ is $h$-invariant. 

Let $e \in E_Y$. By the definition of $E_Y$, there is a vertex $v \in Y$ incident to $e$. Since $e_1 \notin E_Y$, we know $e \neq e_1$, so by the definition of $\G'$, we have $e'$ is incident to $v'$ in $\Gamma'$. By the definition of $f$, 
$$h(e) = g_{\sigma}(f(e)) = g_{\sigma}(e')$$
and
$$h(v) = g_{\sigma}(f(v))= g_{\sigma}(v').$$
Since $g_{\sigma}$ is a graph isomorphism, $g_{\sigma}(e')$ remains incident to $g_{\sigma}(v')$. Moreover, $g_{\sigma}(v')=h(v)  \in  Y$ by invariance of $Y$. Hence $h(e) = g_{\sigma}(e') \in E_Y$, so $E_Y$ is $h$-invariant, completing the proof of (d). \qedhere
\end{enumerate} 
\end{proof}

 \begin{thmx}\label{t:unique}
Up to edge relabeling, the map $\mathfrak{g}$ of Figure \ref{f:map3} is the only train track map representing a principal fully irreducible outer automorphism in any rank whose Stallings fold decomposition consists of only a single fold composed with a graph-relabeling isomorphism.
\end{thmx}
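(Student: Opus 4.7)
The plan is to use Lemma~\ref{lemma:vertices} to set up the vertex cycle, to extract a divisibility constraint from the $h$-orbit of $e_0$ that forces $r=3$, and finally to read the map off directly in rank~$3$.

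By Lemma~\ref{lemma:vertices}, $|V\Gamma|=2r-3$, $|E\Gamma|=n:=3r-4$, and $h=g_\sigma\circ f$ is a transitive bijection on $V\Gamma$, giving a single $(2r-3)$-cycle $v_0\to h(v_0)\to\cdots\to v_2\to v_1\to v_0$. For $i\neq 1$, $h(e_i)=g_\sigma(e_i')$ is a single directed edge, while $h(e_1)=h(e_0)\cdot e_0$ is a $2$-edge path. This induces a map $\pi_0\from\{0,\ldots,n-1\}\to\{0,\ldots,n-1\}$ on un-signed edge labels with $\pi_0(1)=0$. The transition digraph of $h$ is the functional digraph of $\pi_0$ together with one extra arrow $1\to\pi_0(0)$; strong connectivity (required by primitivity of $M(h)$, itself required by full irreducibility of $\vphi$) forces $\pi_0$ to be a single $n$-cycle.

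For $0\le k\le n-1$, then, $h^k(e_0)$ is a single directed edge with underlying label $\pi_0^k(0)$; since $\pi_0^{n-1}(0)=1$, we get $h^{n-1}(e_0)\in\{e_1,\bar e_1\}$. Inductively, using that $h(e_0)$ goes from $h(v_0)$ to $h(v_1)=v_0$, each $h^k(e_0)$ is a directed edge from $h^k(v_0)$ to $h^{k-1}(v_0)$. As $e_1$ has endpoints $\{v_0,v_2\}$, the ordered pair $(h^{n-1}(v_0),h^{n-2}(v_0))$ equals either $(v_0,v_2)$ or $(v_2,v_0)$. In the first case $(2r-3)\mid (n-1)=(2r-3)+(r-2)$, so $(2r-3)\mid (r-2)$, impossible for $r\geq 3$. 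In the second case $(2r-3)\mid (n-2)=(2r-3)+(r-3)$, so $(2r-3)\mid (r-3)$, forcing $r=3$.

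With $r=3$, $n=5$, and vertex cycle $v_0\to v_2\to v_1\to v_0$, the orbit $e_0,h(e_0),h^2(e_0),h^3(e_0),h^4(e_0)=\bar e_1$ consists of five distinct un-signed edges, with endpoint pairs $\{v_0,v_1\},\{v_0,v_2\},\{v_1,v_2\},\{v_0,v_1\},\{v_0,v_2\}$ respectively. Thus $\Gamma$ has two edges between $v_0$ and $v_1$, two between $v_0$ and $v_2$, and one between $v_1$ and $v_2$---the graph underlying $\mathfrak{g}$. The map $h$ is now determined on the four non-folded edges by the orbit and on the folded edge by $h(e_1)=h(e_0)\cdot e_0$; up to edge relabeling, the result is $\mathfrak{g}$. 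The crux is the divisibility computation; once $r=3$ is forced, the rank-$3$ finale is a direct reading-off.
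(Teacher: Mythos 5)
Your proof is correct, and it takes a genuinely different route from the paper's, which is worth highlighting. Both proofs begin from Lemma~\ref{lemma:vertices}, but then diverge. The paper splits into cases by rank: for $r=3$ it invokes the automaton $\widehat{\mA_3}$ and Proposition~\ref{lemma:map3} to locate the two candidate single folds at Node I and rules out the reducible one; for $r\geq 4$ it builds a chain of edges by repeatedly pulling back through $g_\sigma$ (the $e_k$'s and $\alpha_k$'s) until $v_2$ is shown to carry at least four distinct edges, contradicting its valence. You instead run a uniform orbit argument: you observe that the edge-label map $\pi_0$ is a permutation, that irreducibility of $M(h)$ forces $\pi_0$ to be a single $n$-cycle, and that tracking the directed $h$-orbit of $e_0$ against the $(2r-3)$-cycle on vertices pins $h^{n-1}(e_0)$ to $\{e_1,\bar e_1\}$, which yields the divisibility $(2r-3)\mid(r-2)$ or $(2r-3)\mid(r-3)$ and forces $r=3$. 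The same orbit then determines the graph (two edges $v_0v_1$, two $v_0v_2$, one $v_1v_2$) and the map up to relabeling. This buys you uniformity (no separate $r=3$ argument via the automaton), and it is somewhat more self-contained, trading the automaton and edge-chain bookkeeping for a clean counting-mod-$(2r-3)$ argument. The one place you are a bit terse is the final read-off that the determined map is $\mathfrak{g}$ up to edge relabeling; it is true (the orbit fixes the orientations of $h(e_0),h^2(e_0),h^3(e_0)$ and $h(e_1)=h(e_0)\cdot e_0$ fixes the rest), but a sentence spelling this out would be worth adding. A minor terminological nit: what you actually need is \emph{irreducibility} of $M(h)$ (strong connectivity of the transition digraph), not primitivity; both hold here, but the weaker hypothesis suffices and is the one that directly gives connectivity.
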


\begin{proof}
Suppose $h$ is a tt representative of a principal fully irreducible $\vphi \in \outt$. Then, by Proposition \ref{lemma:map3}, $h$ is represented by a directed loop in $\widehat{\mA_3}$ including the ltt structure (call it $G$) at Node I. If $h$ contains only a single fold, then this fold must be one of the folds taking $G$ to itself. Up to permutation-relabeling of ltt structures, there are only 2 such folds (the left-most 2 folds in the Node I box in Figure \ref{f:mA3}). $\mathfrak{g}$ is indicated using gold (dashed) arrows in Figure \ref{f:mA3}. In the graph-labeling class of $\widehat{\mA_3}$ included in Figure \ref{f:mA3}, the other fold within the Node I box would lead to a fold of $d$ over $\bar{c}$ and then the graph isomorphism $a\mapsto\bar{e}$, $b\mapsto c$, $c\mapsto b$, $d\mapsto d$, $e\mapsto\bar{a}$ (a combination that leaves invariant $\{b\} \cup \{c\}$).  
Thus, up to edge relabeling, $h$ is unique.

We now show that for each $r\geq 4$ no such map can exist. Let $n= 2r-3$ be the number of vertices in $\G$. By Lemma ~\ref{lemma:vertices}(d), the vertices of $\Gamma$ are all in the same orbit under $h$. Hence we can recursively label the vertices of $\Gamma$ as follows, taking subscripts modulo $n$:
 \begin{enumerate}
\item Let $v_0$ remain as is and
\item let $v_{k-1}:=h(v_k)$ for each $1 \leq k \leq n$.
\end{enumerate} 
On the vertex set, $h$ is now given by: 
$$v_0 \mapsto v_{n-1} \mapsto v_{n-2} \mapsto \dots \mapsto v_2 \mapsto v_1 \mapsto v_0.$$

We maintain the convention that a vertex $v$ in $\G$ is given the label $v'$ in $\G'$, so we have a labeling of vertices in $\G'$ as well. Since $f(v)=v'$ for each $v \in V\G$, the above labeling implies $g_{\sigma}(v_k')=v_{k-1}$ and $g_{\sigma}^{-1}(v_k)=v'_{k+1}$. Observe that by Lemma ~\ref{lemma:vertices}(c) our labeling is consistent with our original labeling of $v_1$ and $v_2$. 

We will reach a contradiction to the existence of $h$ by showing $v_2$ would have the wrong valence. In what follows, we rely on the fact that the edge-labeling induced by $f$ (in the sense of \S \ref{ss:StallingsFoldDecompositions}) provides a bijection between $E\G\backslash\{e_1\}$ and $E\G'\backslash\{[v_1',v_2']\}$ given by mapping $e=[v,w]$ to  $e'=[v',w']$. Thus, if $e_j'$ and $e_k'$ have distinct vertices and are not $e_1'=[v_1',v_2']$ or $\ol{e_1'}=[v_2',v_1']$, then removing primes on vertices gives distinct edges of $\G$.

Let $e_2':=g_{\sigma}^{-1}(e_1)$. Since $e_1=[v_0,v_2]$ and $g_{\sigma}^{-1}$ is a graph isomorphism, 
$$e_2'=g_{\sigma}^{-1}(e_1)=[g_{\sigma}^{-1}(v_0),g_{\sigma}^{-1}(v_2)]=
[v_1',v_3'].$$
Since $e_1'=[v_1',v_2']$ and $v_2' \neq v_3'$, this tells us $e_2' \notin \{e_1',\overline{e_1'}\}$. Therefore, by our convention for labeling the vertices of $\G'$, there is an edge in $E \G$ 
joining $v_1$ and $v_3$. We call this edge $e_2$. Similarly, let  $e_3':=g_{\sigma}^{-1}(e_2)$. Since $e_2=[v_1,v_3]$ and $g_{\sigma}^{-1}$ is a graph isomorphism,
$$e_3' = g_{\sigma}^{-1}(e_2) = [g_{\sigma}^{-1}(v_1),g_{\sigma}^{-1}(v_3)]=[v_2',v_4'].$$
By Lemma ~\ref{lemma:vertices}(a) and the assumption that $r \geq 4$, we have $|V\G'|=|V\G|\geq 5$. Hence $v_1' \neq v_4'$, so $e_3' \notin \{e_1',\overline{e_1'}\}$. So there is an edge in $E\G$ joining $v_2$ and $v_4$. We call this edge $e_3$.

Let $\alpha_1:=g_{\sigma}(e_0')$. Since $e_0'=[v_0',v_1']$ and $g_{\sigma}$ is a graph isomorphism,
$$\alpha_1 = g_{\sigma}(e_0') = [g_\sigma(v_0'), g_\sigma(v_1')] = [v_{n-1},v_{0}].$$
Since $n \geq 5$, we know $v_{n-1} \neq v_2$, and hence also $\alpha_1 \notin \{e_1,\overline{e_1}\}$. By the labeling of $\G'$, there is an edge $\alpha_1' = [v_{n-1}',v_{0}'] \in E\G'$.  Similarly, let 
$$\alpha_2:=g_{\sigma}(\alpha_1') = [g_\sigma(v_{n-1}'), g_\sigma(v_{0}')] =[v_{n-2},v_{n-1}].$$ 
Again, by $\G'$ labeling conventions, there is an edge $\alpha_2' = [v_{n-2}',v_{n-1}'] \in E\G'$. Recursively define 
$$\alpha_k := g_{\sigma}(\alpha_{k-1}') = [g_{\sigma}(v_{n-k+1}'),g_{\sigma}(v_{n-k+2}')]=[v_{n-k}',v_{n-k+1}'].$$
for each $k \in \{2, \dots n-1\}$.

The final two recursively defined edges $\alpha_{n-2} = [v_2,v_3]$ and $\alpha_{n-1}=[v_1,v_2]$ both contain $v_2$. Moreover, $e_1 = [v_0,v_2]$ and $e_3=[v_2,v_4]$ contain $v_2$. Since $v_0, v_1,v_3,$ and $v_4$ are distinct vertices, all four of these edges are distinct. Therefore $v_2$ is contained in at least 4 distinct edges in $\G$. This contradicts that $v_2$ should have valence 3. Thus for each $r \geq 4$ there cannot exist a single-fold irreducible tt map $h$ representing a principal fully irreducible element of $\out$. \qedhere


\begin{center}
\begin{figure}[t]
\includegraphics[width=5in]{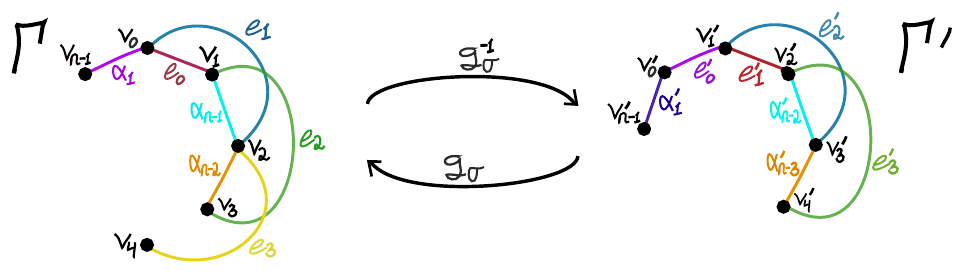}
\end{figure}
\end{center}

\end{proof}
\vfill

 \begin{center}
\begin{figure}[H]
\includegraphics[height=8.5in]{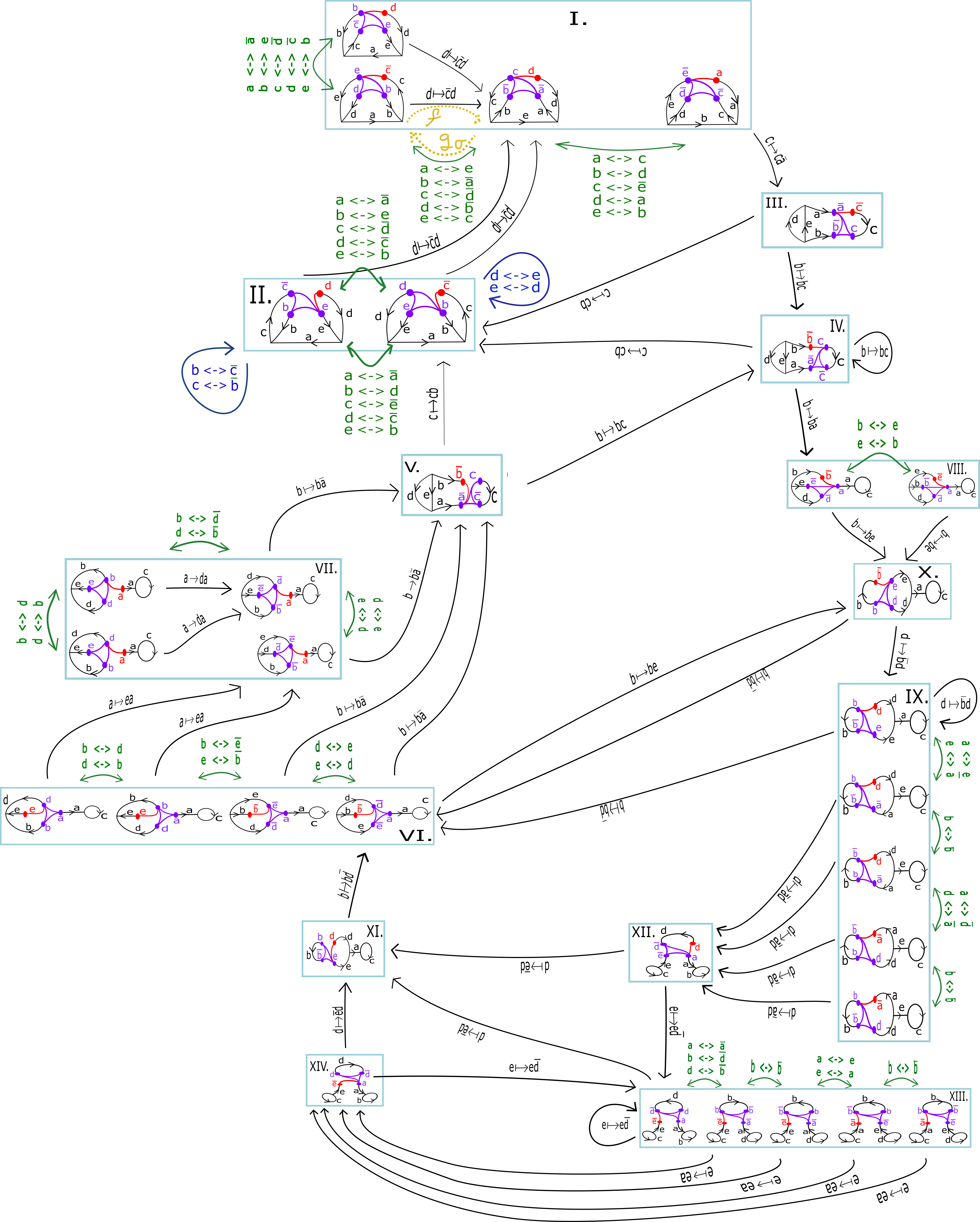}
\vspace*{5mm}
 \caption{This figure depicts the Rank-3 Principal Stratum Automaton. Permutations of the automata are in green; note that if moving right to left across the page one must read the permuatation right to left as well. Compositions of included permutations, as well as color-preserving graph symmetries, are implicitly included. (Symmetries are included in blue at II as an example.) The map of Figure \ref{f:map3} is in dotted gold.} \label{f:mA3}
\end{figure}
\end{center}

\newcommand{\etalchar}[1]{$^{#1}$}

\end{document}